\DeclareMathOperator{\Lan}{Lan}
\DeclareMathOperator{\Ran}{Ran}
\DeclareMathOperator{\im}{im}
\DeclareMathOperator{\coker}{coker}
\DeclareMathOperator{\ind}{ind}
\DeclareMathOperator{\codim}{codim}
\DeclareMathOperator{\len}{length}
\DeclareMathOperator{\soc}{soc}
\newcommand{\A}{{\mathcal{A}}}
\theoremstyle{plain}
\newtheorem{theorem}{Theorem}[section]
\newtheorem{lemma}[theorem]{Lemma}
\newtheorem{proposition}[theorem]{Proposition}
\newtheorem{corollary}[theorem]{Corollary}
\theoremstyle{definition}
\newtheorem{definition}[theorem]{Definition}
\newtheorem{remark}[theorem]{Remark}
\newtheorem{example}[theorem]{Example}
\begin{document}

\title{Quotient Modules of Finite Length and Their Relation to Fredholm Elements in Semiprime Rings}
\author{Niklas Ludwig}
\maketitle

\section{Abstract}
B. A. Barnes introduced so-called Fredholm elements in a semiprime ring whose definition is
inspired by Atkinson's theorem \cite{barnes_1969}. Here the socle of a semiprime ring generalizes 
the ideal of finite-rank operators on a Banach space.
There already exist many generalizations of Fredholm elements like the one discussed in $\cite{smyth_1982}$ and $\cite{maennleschmoeger_1999}$.
In this paper, we aim to see that the algebraic concept of the length of a module is
strongly related to that of Fredholm elements. This motivates another generalization of Fredholm elements
by requiring for an element $a\in\A$ that the $\A$-modules of the form $\A/\A a$ and $\A/a\A$ are of finite length.
We are particularly interested in sufficient conditions for our generalized Fredholm elements to be Fredholm. 
In a unital C$^*$-algebra $\A$ we shall even see that an element $a\in\A$ is Fredholm if and only if the $\A$-modules $\A/\A a$ and $\A/a\A$
both have finite length.

\section{Preliminary}
Let $\A$ be a ring and $S\subseteq\A$. Then we define the annihilators
\begin{align*}
\Lan(S)&\coloneqq\{a\in\A: as=0 \text{ for all } s\in S\} \\
\Ran(S)&\coloneqq\{a\in\A: sa=0 \text{ for all } s\in S\}.
\end{align*}
Note that $\Lan(S)\subseteq\A$ is a left ideal and $\Ran(S)\subseteq\A$ is a right ideal.
A semiprime ring $\A$ is called right annihilator ring if for every modular maximal left ideal $m\subsetneq\A$ it holds that $\Ran(m)\neq0$ (see \cite{barnes_1966}).
Now assume $\A$ is semiprime. Then, we write $\soc(\A)$ for the socle of $\A$.
Moreover, we call a left (right) ideal of $\A$ of finite order if it can be written as a finite sum of minimal left (right) ideals.
In this case, the minimal number of such minimal left (right) ideals needed is called the order of this left (right) ideal (see \cite{barnes_1968}).
If $\A$ is unital, an element $a\in\A$ is called Fredholm if the image $[a]$
of the quotient map is invertible in $\A/\soc(\A)$ (see \cite{barnes_1969}). 
Similarly, we call an element $a$ semi$^+$-Fredholm if $[a]$ is left invertible and 
semi$^-$-Fredholm if $[a]$ is right invertible in $\A/\soc(\A)$. \\
Caution is advised as we always use the term semisimple in the sense of $\A$-modules and call a ring $J$-semisimple instead
if the intersection of its modular maximal left ideals is trivial.
Throughout the paper, we shall only consider Banach algebras and C$^*$-algebras over the complex numbers.
\section{Fredholm Ideals}

In the entire section $\A$ denotes a semiprime unital ring.
For us, it will be more convenient to study Fredholm elements by using 
the theory of one-sided ideals.

\begin{definition}
We call a left ideal $L\subseteq\A$ Fredholm if it contains a Fredholm element.
\end{definition}

The following is an important result due to Barnes:

\begin{lemma} \label{soc_idmp}
Assume $F\subseteq\A$ is a left (right) ideal of finite order. Then there exists an idempotent $p\in\soc(\A)$ satisfying $\A p = F$ ($p\A=F$).
\end{lemma}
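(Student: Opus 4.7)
The plan is to induct on the order $n$ of $F$. The right-ideal case is symmetric, so I focus on left ideals throughout.

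For the base case $n = 1$, $F = L$ is a minimal left ideal, and I need to produce an idempotent generator. Semiprimeness of $\A$ rules out $L^{2} = 0$, so there is $a \in L$ with $La \neq 0$; minimality then forces $La = L$, so some $e \in L$ satisfies $ea = a$. The left ideal $\{x \in L : xa = 0\}$ is properly contained in $L$ (it omits $e$), hence zero by minimality. Since $e^{2} - e$ lies in this ideal, $e$ is idempotent, and $\A e = L \subseteq \soc(\A)$.

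For the inductive step, decompose $F = F' + L$ with $F'$ of order $n - 1$ and $L$ minimal; by induction $F' = \A p$ for an idempotent $p \in \soc(\A)$. Either $L \subseteq \A p$ and we are done, or minimality forces $L \cap \A p = 0$, so that $F = \A p \oplus L$. The key idea now is to orthogonalise against $p$ using the left Peirce decomposition $\A = \A p \oplus \A(1 - p)$: the projection $\pi \colon \A \to \A(1 - p)$, $\pi(x) = x(1 - p)$, is a left $\A$-module map with kernel $\A p$. Its restriction to $L$ is injective, so $\pi(L) \subseteq \A(1 - p)$ is again a minimal left ideal, and applying the base case to it produces an idempotent $f \in \pi(L) \subseteq \soc(\A)$ with $\A f = \pi(L)$ and automatically $fp = 0$ because $\pi(L) \cdot p = 0$.

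The final candidate is $p' = p + (1 - p)f$. Using only $fp = 0$ and $f^{2} = f$, direct calculation yields $p'^{2} = p'$, $p' p = p$, and $p' f = f$: the first shows $p'$ is idempotent, the last two show $p, f \in \A p'$, while $p' \in \A p + \A f$ is manifest. A short check then gives $\A p + \A f = \A p + L = F$, and $p' \in \soc(\A)$ follows since the socle is a two-sided ideal. The step I expect to require the most care is exactly this choice of $p'$: one has $fp = 0$ but in general not $pf = 0$, so the naive candidate $p + f$ fails to be idempotent; the asymmetric correction $(1 - p)f$ is precisely what restores idempotency while still generating $\A p + L$.
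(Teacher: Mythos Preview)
The paper does not supply its own argument here; it simply cites Barnes (1968), Theorem~2.2. Your inductive proof is essentially the classical argument behind that result and is sound in outline: the base case is the standard lemma that a minimal left ideal in a semiprime ring is generated by an idempotent, and the inductive step is the usual Peirce-orthogonalisation trick.

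One slip to fix: to conclude $p, f \in \A p'$ (so that $\A p + \A f \subseteq \A p'$) you need $pp' = p$ and $fp' = f$, not $p'p = p$ and $p'f = f$ as you wrote; membership in the \emph{left} ideal $\A p'$ is detected by right-multiplying by $p'$. Fortunately the correct identities also hold from $fp = 0$ alone: $pp' = p^{2} + p(1-p)f = p$ and $fp' = fp + f(1-p)f = f^{2} = f$. With that correction the argument goes through, and $\A p' = \A p + \A f = \A p + L(1-p) = \A p + L = F$ as you intended.
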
 
\begin{proof}
(see \cite{barnes_1968}, Theorem 2.2)
\end{proof}

\begin{proposition} \label{ch_fred_idp}
A left ideal $L\subseteq\A$ is Fredholm if and only if there exists an idempotent $p\in\soc(\A)$ such that $L=\A(1-p)$.
\end{proposition}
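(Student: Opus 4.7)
The plan is to handle the backward direction first and spend the bulk of the work on the forward one. If $L = \A(1-p)$ with $p$ an idempotent in $\soc(\A)$, then $1-p\in L$ has image $[1]$ in $\A/\soc(\A)$, hence is Fredholm, which makes $L$ Fredholm by definition.

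For the forward implication, I would start with a Fredholm element $a\in L$ and pick $b\in\A$ such that $s := 1 - ba$ lies in $\soc(\A)$. Since $L$ is a left ideal and $a \in L$, we have $ba\in L$, so $1-s = ba$ lies in $L$. The next move is to replace $s$ by an idempotent in the socle. For this I would argue that $\A s$ is a cyclic left submodule of $\soc(\A)$, which is semisimple as a left $\A$-module; hence $\A s$ is itself semisimple and finitely generated, i.e., a finite sum of minimal left ideals. By Lemma \ref{soc_idmp} there is an idempotent $e\in\soc(\A)$ with $\A e = \A s$, and consequently $1 = (1-s) + s \in L + \A e$, giving $\A = L + \A e$.

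To finish, I would exploit that $\A e$ is semisimple as a left $\A$-module (being of finite order) and choose a submodule $M\subseteq\A e$ complementary to $L\cap\A e$. A routine check then gives $\A = L+M$ and $L\cap M \subseteq M\cap(L\cap\A e) = 0$, so that $\A = L\oplus M$. Decomposing $1 = f + p$ with $f\in L$ and $p\in M$ yields orthogonal idempotents satisfying $L = \A f$, and $p\in M \subseteq \A e \subseteq \soc(\A)$; therefore $L = \A(1-p)$ with the required idempotent $p$. I expect the only real subtlety to be the finite-order claim for $\A s$, which rests on the standard fact that the socle of a semiprime ring is semisimple as a one-sided module; after that, the argument reduces to a clean application of Lemma \ref{soc_idmp} together with the splitting of submodules in a semisimple module.
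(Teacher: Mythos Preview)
Your argument is correct, and it takes a genuinely different route from the paper's. The paper invokes an external result of Barnes (\cite{barnes_1969}, Theorem~2.3) to get at once that $\A a=\A(1-q)$ for some idempotent $q\in\soc(\A)$, then works inside $\A q$: the left ideal $L\cap\A q$ is of finite order, hence equals $\A p'$ for an idempotent $p'$ by Lemma~\ref{soc_idmp}, and one sets $p\coloneqq q-p'$ and verifies $L=\A(1-p)$ by explicit idempotent arithmetic. You instead start from the raw relation $s=1-ba\in\soc(\A)$ and argue module-theoretically: the cyclic submodule $\A s$ of the semisimple $\A$-module $\soc(\A)$ is of finite order, so Lemma~\ref{soc_idmp} gives $\A=L+\A e$; semisimplicity of $\A e$ then produces a direct complement $M$ of $L\cap\A e$, whence $\A=L\oplus M$, and decomposing $1=f+p$ yields the idempotent. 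Your approach is more self-contained (it does not rely on the cited Barnes theorem and in effect recovers it) and brings out the underlying module-theoretic reason; the paper's approach is shorter if one grants the citation and stays closer to concrete idempotent computations. Both ultimately hinge on Lemma~\ref{soc_idmp} and the semisimplicity of finite-order one-sided ideals.
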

\begin{proof}
Suppose there is a Fredholm element $a\in L$. Then, there exists an idempotent $q\in\soc(\A)$ such that 
$\A a = \A (1-q)$ holds (see \cite{barnes_1969}, Theorem 2.3). We set $L'\coloneqq L\cap \A q$. As $L'$ is of finite order,
there exists an idempotent $p'\in\A$ such that $L'=\A p'$ by Lemma \ref{soc_idmp}. Now, we define $p\coloneqq q-p'$. Note that $p'q=p'$ and hence $p'p=0$. \\
In order to show $L\subseteq\A(1-p)$, let $x\in L$. Then $xq=x-x(1-q)\in L$ and therefore $xq\in L'$.
Thus, there exists $b\in\A$ with $xq=bp'$. We finally see that
$$xp=x(1-q)p+xqp = x(1-p-p')p+bp'p = 0. $$
For the other inclusion, let $x\in\A$ with $xp=0$. Again we write $x = x(1-q) + xq$ and note that $x(1-q)\in L$.
As $xq = xp+xp' = xp'$, it finally follows that $L=\A(1-p)$.
\end{proof}

\begin{corollary}
Let $a\in\A$. Then $a$ is semi$^+$-Fredholm if and only if the left ideal $\A a$ is Fredholm.
\end{corollary}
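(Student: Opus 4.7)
The plan is to prove the two implications separately; the backward one is straightforward, while the forward one hinges on doing the splitting inside the socle.

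For the backward direction, assume $\A a$ contains a Fredholm element $c$ and write $c = xa$. Pick $d\in\A$ with $dc - 1\in\soc(\A)$ (which exists because $[c]$ is in particular left invertible in $\A/\soc(\A)$). Then $(dx)a - 1\in\soc(\A)$, so $[dx][a] = [1]$, exhibiting $[a]$ as left invertible. Hence $a$ is semi$^+$-Fredholm.

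For the forward direction, assume $a$ is semi$^+$-Fredholm and pick $b\in\A$ with $s\coloneqq ba - 1\in\soc(\A)$. The identity $1 = ba - s$ gives $\A = \A a + \soc(\A)$ at once. Since $\soc(\A)$ is by definition a sum of minimal left ideals, it is semisimple as a left $\A$-module, and so its submodule $\A a\cap\soc(\A)$ admits a complement: $\soc(\A) = (\A a\cap\soc(\A))\oplus N$ for some left ideal $N\subseteq\soc(\A)$. Substituting into $\A = \A a + \soc(\A)$ and noting that $N\cap\A a\subseteq N\cap(\A a\cap\soc(\A)) = 0$ yields the direct sum decomposition $\A = \A a\oplus N$ of left $\A$-modules. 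Writing $1 = e + p$ with $e\in\A a$ and $p\in N$, and applying the decomposition to $p$ and to an arbitrary $y\in\A a$ (invoking uniqueness each time), one obtains $p^2 = p$ and $\A a = \A(1-p)$ by routine bookkeeping. Since $p\in N\subseteq\soc(\A)$, Proposition \ref{ch_fred_idp} then gives that $\A a$ is Fredholm.

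The point to be careful about is precisely the existence of $N$. A more tempting route would be to observe that $\A/\A a$ is a quotient of $\soc(\A)/(\A a\cap\soc(\A))$, hence semisimple, and then try to split the surjection $\A\to\A/\A a$; but simple left $\A$-modules need not be projective over a general ring, so this splitting is not automatic from semisimplicity of $\A/\A a$ alone. Performing the splitting inside $\soc(\A)$ itself sidesteps this, since semisimplicity of the socle is built into its definition, and the complement $N$ one produces there automatically sits inside $\soc(\A)$, which is exactly what Proposition \ref{ch_fred_idp} requires of the idempotent $p$.
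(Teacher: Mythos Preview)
Your proof is correct. The backward direction is exactly the intended argument. For the forward direction, however, the paper treats the corollary as immediate and gives no proof; the one-line reason is that if $ba-1\in\soc(\A)$ then $[ba]=[1]$ is invertible in $\A/\soc(\A)$, so $ba\in\A a$ is itself a Fredholm element, and $\A a$ is a Fredholm left ideal directly from Definition~3.1. Your route---splitting $\soc(\A)=(\A a\cap\soc(\A))\oplus N$ to manufacture an idempotent $p\in\soc(\A)$ with $\A a=\A(1-p)$---is considerably longer but has independent value: it yields a self-contained proof that $\A a=\A(1-p)$ for some idempotent $p\in\soc(\A)$ whenever $a$ is semi$^+$-Fredholm, which is essentially the semi$^+$ analogue of Barnes's Theorem~2.3 invoked inside the proof of Proposition~\ref{ch_fred_idp}, obtained here without appeal to that external reference.
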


\begin{proposition} \label{inter_fred}
A finite intersection of Fredholm left ideals is again Fredholm.
\end{proposition}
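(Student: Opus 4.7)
The plan is to reduce by induction to the case of two Fredholm left ideals \(L_1, L_2\) and then apply the characterisation from Proposition \ref{ch_fred_idp}. Writing \(L_i = \A(1-p_i)\) with idempotents \(p_i \in \soc(\A)\), and using that in a unital ring \(\A(1-p_i) = \{x \in \A : xp_i = 0\}\), we get
\[ L_1 \cap L_2 = \{x \in \A : xp_1 = 0 = xp_2\}. \]
The strategy is to produce a \emph{single} idempotent \(p \in \soc(\A)\) with \(p\A = p_1\A + p_2\A\); such a \(p\) satisfies \(\{x : xp = 0\} = \{x : xp_1 = 0 = xp_2\}\) (one direction is immediate; the other uses that \(p_i \in p_i\A \subseteq p\A\) in a unital ring), whence \(L_1 \cap L_2 = \A(1-p)\) and Proposition \ref{ch_fred_idp} finishes the argument.

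Such a \(p\) is delivered by the right-sided version of Lemma \ref{soc_idmp}, provided \(R \coloneqq p_1\A + p_2\A\) is a right ideal of finite order. Verifying this is the heart of the proof and splits into two steps. First, each \(p_i\A\) is of finite order: decomposing \(p_i\) as a finite sum of elements taken from minimal right ideals \(M_1, \dots, M_k\) (possible since \(\soc(\A)\) is also the sum of all minimal right ideals of \(\A\)) yields \(p_i\A \subseteq M_1 + \dots + M_k\), and any right submodule of this semisimple right \(\A\)-module of finite length is itself a finite sum of minimal right ideals. Second, a sum of two right ideals of finite order is again of finite order, so \(R\) qualifies.

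The main obstacle is precisely the semisimple-submodule argument above; once Lemma \ref{soc_idmp} applies to \(R\) the rest is essentially bookkeeping, and the induction over more than two ideals is immediate since pairwise intersections remain Fredholm.
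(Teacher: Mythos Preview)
Your argument is correct and follows essentially the same route as the paper: reduce to two ideals, write $L_i=\A(1-p_i)$ via Proposition~\ref{ch_fred_idp}, obtain a single idempotent $p\in\soc(\A)$ with $p\A=p_1\A+p_2\A$ from Lemma~\ref{soc_idmp}, and verify $L_1\cap L_2=\A(1-p)$ by the annihilator description $\A(1-q)=\{x:xq=0\}$. The only difference is cosmetic: the paper simply asserts that $p_1\A+p_2\A$ is of finite order, whereas you supply the (standard) justification via the semisimplicity of a finite sum of minimal right ideals.
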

\begin{proof}
Suppose $L\subseteq\A$ and $L'\subseteq\A$ both are Fredholm left ideals. By Proposition \ref{ch_fred_idp} there exist $p,q\in\soc(\A)$
with $L=\A(1-p)$ and $L'=\A(1-q)$. As $p\A+q\A$ is of finite order, Lemma \ref{soc_idmp} implies that there exists $r\in\soc(\A)$ such that 
$$ p\A + q\A = r\A. $$
We show that $L\cap L' = \A(1-r)$: Indeed let $x\in L\cap L'$. As $xp=xq=0$
and $r\in p\A + q\A$, we obtain that $xr=0$. Conversely, pick $x\in\A$ with $xr=0$.
Clearly $p,q\in r\A$. This already implies $xp=xq=0$ and thus $x\in L\cap L'$.
\end{proof}

We show the following statement in order to bring the concept of length into play.
    
\begin{lemma} \label{order_len}
Let $L\subseteq\A$ be a left ideal. Then $L$ is of finite order if and only if $L$ has finite length as an $\A$-module.
In this case, the order of $L$ and $\len_\A(L)$ coincide. 
\end{lemma}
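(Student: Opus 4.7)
The statement packages two implications together with the numerical claim that the order equals the length; I would prove both directions in tandem, using Lemma~\ref{soc_idmp} as the key technical input.

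For the ``only if'' direction, suppose $L = L_1 + \cdots + L_n$ with $n$ minimal among such presentations and each $L_i$ a minimal left ideal. I would argue that minimality of $n$ forces this sum to be direct: otherwise some $L_i \cap \sum_{j \neq i} L_j$ would be a nonzero submodule of the simple module $L_i$, hence all of $L_i$, so that $L_i \subseteq \sum_{j \neq i} L_j$ and the presentation would collapse to $n-1$ summands, contradicting the choice of $n$. Therefore $L \cong L_1 \oplus \cdots \oplus L_n$ is semisimple of length exactly $n$.

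For the ``if'' direction, I would induct on $\len_\A(L) = n$, the cases $n = 0,1$ being immediate. For $n \geq 2$, fix a composition series $0 = L_0 \subsetneq L_1 \subsetneq \cdots \subsetneq L_n = L$. By the induction hypothesis, $L_{n-1}$ is of finite order $n-1$, so Lemma~\ref{soc_idmp} produces an idempotent $p \in \soc(\A)$ with $L_{n-1} = \A p$. The identity $a = ap + a(1-p)$ together with $\A p \cap \A(1-p) = 0$ yields the left-module decomposition $\A = \A p \oplus \A(1-p)$; restricting to $L$ and using $\A p \subseteq L$ gives $L = L_{n-1} \oplus (L \cap \A(1-p))$, and the projection $x \mapsto x(1-p)$ identifies $L/L_{n-1}$ with $L \cap \A(1-p)$. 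Since $L/L_{n-1}$ is simple, the ideal $L \cap \A(1-p)$ is a minimal left ideal of $\A$, and adjoining it to the presentation of $L_{n-1}$ exhibits $L$ as a sum of $n$ minimal left ideals. Applying the forward direction to this presentation then forces $\len_\A(L)$ to equal the order, yielding also the numerical claim.

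The main obstacle lies in the inductive step of the ``if'' direction: a composition series gives only that successive quotients are abstractly simple, whereas the conclusion requires $L$ to be an honest \emph{sum} of minimal left ideals of $\A$, not merely an iterated extension of such. Lemma~\ref{soc_idmp} is precisely what allows this gap to be bridged, because it supplies an idempotent that splits $L_{n-1}$ off $L$ and thereby realizes the abstract simple quotient $L/L_{n-1}$ as a concrete minimal left ideal $L \cap \A(1-p)$ sitting inside $L$.
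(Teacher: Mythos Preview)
Your proof is correct and follows the same overall architecture as the paper's: both directions are handled, the forward one by recognising the minimal presentation as a direct sum of simples, and the backward one by induction on the length, splitting off a piece via an idempotent and the decomposition $L=\A p\oplus(L\cap\A(1-p))$.

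The one genuine difference is the orientation of the inductive step. The paper works \emph{bottom-up}: it picks a minimal left ideal $m\subseteq L$ (which exists because $\len_\A(L)<\infty$), writes $m=\A p$ for a minimal idempotent $p$, splits $L=\A p\oplus(L\cap\A(1-p))$, and applies the induction hypothesis to $L\cap\A(1-p)$, which has length $n-1$. You instead work \emph{top-down}: you take the penultimate term $L_{n-1}$ of a composition series, apply the induction hypothesis there, invoke Lemma~\ref{soc_idmp} to write $L_{n-1}=\A p$, and then realise the simple top quotient $L/L_{n-1}$ as the minimal left ideal $L\cap\A(1-p)$. The paper's route is marginally lighter in that it needs only the classical fact that a minimal left ideal in a semiprime ring is generated by a (minimal) idempotent, rather than the full strength of Lemma~\ref{soc_idmp}; your route, on the other hand, makes the role of Lemma~\ref{soc_idmp} more explicit and avoids citing Barnes for the direct-sum decomposition in the forward direction. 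Both are perfectly sound.
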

\begin{proof}
Assume $L$ is of finite order. Then, there exist minimal left ideals $m_1,\dots,m_n\subseteq L$ such that $L=\oplus_{j=1}^n m_j$ 
and $n$ is the order of $L$ (see \cite{barnes_1968}, Theorem 2.2).
Clearly, we have $\len_\A(m_j)=1$. As the length is additive, we get
$$ \len_\A(L) = \sum_{j=1}^n \len_\A(m_j) = n. $$
Conversely, suppose $n\coloneqq\len_\A(L)<\infty$. We prove the statement by induction on $n$: \\
If $n=0$, we are done. \\
If $n>0$, the left ideal $L$ contains a minimal left ideal $m\subseteq L$ since $\len_\A(L)<\infty$. Now, there exists a minimal idempotent $p\in\A$ such that $m=\A p$.
Thus, we can write $\A p\oplus [L\cap\A(1-p)]= L$: We only show $L\subseteq \A p\oplus [L\cap\A(1-p)]$: Indeed for $x\in L$,
we have $x(1-p)=x-xp\in L$ and hence $x(1-p)\in L\cap\A(1-p)$. This yields $x=xp+x(1-p)\in\A p\oplus [L\cap\A(1-p)]$.
Finally 
$$\len_\A(L\cap\A(1-p))=n-1$$
and the claim follows by using the induction hypothesis.
\end{proof}

\begin{definition}
Let $L\subseteq\A$ be a left ideal. Then we set $\varrho(L)\coloneqq \len_{\A}(\Ran(L))$ and
$\xi(L)\coloneqq \len_\A(\A/L)$. If $L=\A a$ for some $a\in\A$, we write $\varrho_l(a)\coloneqq\varrho(L)$ and
$\xi_l(a)\coloneqq\xi(L)$. The corresponding definition can be made for right ideals. In particular for $a\in\A$,
we write $\varrho_r(a)\coloneqq\len_{\A}(\Lan(a))$ and $\xi_r(a)\coloneqq\len_{\A}(\A/a\A)$.
\end{definition}

\begin{lemma}
Let $a\in\soc(\A)$. Then both $a\A$ and $\A a$ are of the same order. 
\end{lemma}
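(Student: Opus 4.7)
The plan is to combine Lemma \ref{soc_idmp} with a decomposition of $p$ into orthogonal minimal idempotents in order to exhibit $a\A$ as a sum of $n$ minimal right ideals, where $n$ is the order of $\A a$. By Lemma \ref{soc_idmp} there is an idempotent $p\in\soc(\A)$ with $\A a=\A p$, and by the standard decomposition of idempotents in the socle of a semiprime ring I would write $p=p_1+\cdots+p_n$ as a sum of pairwise orthogonal minimal idempotents, giving the internal decomposition $\A p=\bigoplus_{i=1}^n\A p_i$. Note that the order of $\A a$ is then exactly $n$ by Lemma \ref{order_len}.

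The key observation is that every $ap_i$ is nonzero. Indeed, since $p\in\A a$ there exists $x\in\A$ with $p=xa$; expanding $a=ap=\sum_j ap_j$ yields $\sum_j p_j=p=\sum_j xap_j$, and comparing summands in the internal direct sum $\A p=\bigoplus_j \A p_j$ forces $p_j=xap_j$ for every $j$, so that $ap_j\neq 0$. It then follows that each $ap_i\A$ is a minimal right ideal: the right $\A$-module map $p_i\A\to ap_i\A$ given by $p_iy\mapsto ap_iy$ is well-defined and surjective, and it is nonzero because $ap_i\in ap_i\A$ is nonzero, so it must be an isomorphism by simplicity of $p_i\A$.

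Using $a=\sum_i ap_i$, I would then conclude $a\A=\sum_{i=1}^n ap_i\A$, so that $a\A$ is a sum of $n$ minimal right ideals and hence of order at most $n$. Running the whole argument with the roles of left and right exchanged, starting instead from an idempotent $q\in\soc(\A)$ with $a\A=q\A$ of order $m$, yields the opposite bound $n\leq m$, and the two inequalities together give equality of the orders. The main subtlety will be extracting the nonvanishing of every $ap_i$ from the single relation $p\in\A a$ via uniqueness of the direct sum decomposition of $\A p$; once this is in hand, the identification of $ap_i\A$ as a minimal right ideal is a short diagram chase, and the symmetry argument introduces no new ideas.
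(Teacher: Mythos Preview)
Your argument is correct and follows essentially the same route as the paper: write $\A a=\A p$ with $p=p_1+\cdots+p_n$ a sum of orthogonal minimal idempotents, push $p\A$ onto $a\A$ via left multiplication by $a$ (the paper phrases this as $x\mapsto bx$ for any $b$ with $a=bp$, but $a=ap$ lets one take $b=a$), and conclude that $a\A$ has order at most $n$; then swap sides. One remark: the step you flag as ``the key observation,'' namely that each $ap_i\neq 0$, is in fact unnecessary for the inequality you are after---even if some $ap_i$ vanished, $a\A=\sum_i ap_i\A$ would still be a sum of at most $n$ minimal right ideals, which is all you use before invoking symmetry. The paper accordingly omits this step and simply records the surjection $p\A\to a\A$.
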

\begin{proof}
Let $\{p_1,\dots,p_n\}\subseteq \A a$ be a maximal orthogonal set of minimal idempotents.
Then $\A a = \A p$ where $p\coloneqq p_1+\dots+p_n$ (see \cite{barnes_1968}, Theorem 2.2) and $n$ coincides with the order of $\A a$.
Therefore, there exists $b\in\A$ satisfying $a=bp$ and the map $p\A\to a\A,x\mapsto bx$ is surjective. Thus, we conclude that the order of
$a\A$ is bounded by $n$. The other inequality is shown analogously.
\end{proof}

\begin{proposition} \label{ch_fred_len}
Let $L\subseteq\A$ be a left ideal. Then $L$ is Fredholm if and only if $\xi(L)\leq\varrho(L)<\infty$.
In this case, $\xi(L)=\varrho(L)$.
\end{proposition}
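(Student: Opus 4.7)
The plan is to reduce everything to the structural characterization from Proposition \ref{ch_fred_idp}: a left ideal is Fredholm exactly when it has the form $\A(1-p)$ for some idempotent $p\in\soc(\A)$. Given such a $p$, both quantities defining $\xi$ and $\varrho$ should become the length (equivalently, order) of $\A p$ and $p\A$ respectively, and the unnamed lemma just before the proposition tells us those orders agree.

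For the forward direction, fix an idempotent $p\in\soc(\A)$ with $L=\A(1-p)$. The map $\A\to\A p,\,x\mapsto xp$ descends to an isomorphism $\A/L\cong\A p$, since $xp=0$ is equivalent to $x=x(1-p)$. A direct calculation also gives $\Ran(L)=p\A$: an element $x$ annihilates $\A(1-p)$ on the right iff $(1-p)x=0$ iff $x\in p\A$. Because $p\in\soc(\A)$, both $\A p$ and $p\A$ are of finite order, so by Lemma \ref{order_len} (and its right-hand analogue) their lengths equal their orders, and the preceding lemma forces these orders to coincide. Hence $\xi(L)=\len_\A(\A p)=\len_\A(p\A)=\varrho(L)<\infty$.

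For the converse, starting from $\varrho(L)<\infty$, apply the right-sided Lemma \ref{order_len} to see $\Ran(L)$ is of finite order, then Lemma \ref{soc_idmp} produces an idempotent $p\in\soc(\A)$ with $\Ran(L)=p\A$. The identity $L\cdot p\A=0$ gives $Lp=0$, so $L\subseteq\Lan(p)=\A(1-p)$, yielding a surjection $\A/L\twoheadrightarrow\A/\A(1-p)\cong\A p$. Chaining lengths,
\[
\len_\A(\A p)\;\leq\;\xi(L)\;\leq\;\varrho(L)\;=\;\len_\A(p\A)\;=\;\len_\A(\A p),
\]
where the last equality again combines the preceding lemma with Lemma \ref{order_len}. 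Thus every inequality is an equality, and a surjection between modules of the same finite length must be an isomorphism; this forces $L=\A(1-p)$, so $L$ is Fredholm by Proposition \ref{ch_fred_idp}.

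The main obstacle is handling the right-sided book-keeping smoothly: one has to interpret $\len_\A$ applied to a right ideal as right-module length, appeal to the right-hand analogue of Lemma \ref{order_len}, and then transfer information between the two sides via the coincidence of orders of $\A p$ and $p\A$. Once this bridge is in place, the classical observation that a length-preserving surjection between modules of equal finite length is an isomorphism closes the converse direction cleanly, and the equality $\xi(L)=\varrho(L)$ is automatic.
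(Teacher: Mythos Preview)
Your argument is correct and follows essentially the same route as the paper: both directions hinge on Proposition~\ref{ch_fred_idp}, the identification $\A/\A(1-p)\cong\A p$ and $\Ran(\A(1-p))=p\A$, the equality of the orders of $\A p$ and $p\A$ from the preceding lemma, and the observation that a surjection $\A/L\twoheadrightarrow\A/\A(1-p)$ between modules of equal finite length must be an isomorphism. Your explicit remark about needing the right-sided analogue of Lemma~\ref{order_len} is a fair point; the paper uses it tacitly.
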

\begin{proof}
Suppose that $L$ is Fredholm. By Proposition \ref{ch_fred_idp}, there exists an idempotent $p\in\soc(\A)$ such that $L=\A(1-p)$.
As the one-sided ideals $p\A=\Ran(L)$ and $\A p$ are of the same order and $\A/L\cong \A p$, Lemma \ref{order_len} yields $\len_\A{\A/L}=\varrho(L)<\infty$. \\
Conversely, assume $\len_\A{\A/L}\leq\varrho(L)<\infty$. Lemma \ref{order_len} implies that $\Ran(L)$ is of finite order and thus
there exists an idempotent $p\in\soc(\A)$ satisfying $p\A = \Ran(L)$ by Lemma \ref{soc_idmp}.
We set $L'\coloneqq \A(1-p)$ and we already know that $\len_\A \A/L'=\varrho(L')=\varrho(L)$. Since $L\subseteq L'$, we can
consider the surjective map 
$$ \nu\colon \A/L\to \A/L', [x]\mapsto [x]. $$
As $\len_\A{\A/L} \leq \varrho(L) = \len_\A \A/L'<\infty$, the map $\nu$ is even an isomorphism. Hence, $L=L'$ and we are done.
\end{proof}

\begin{theorem} \label{ch_fred_th}
Let $L\subseteq\A$ be a left ideal. Then the following statements are equivalent:
\begin{enumerate}[label={(\roman*)}]
\item $L$ is Fredholm,
\item $L=\Lan(\Ran(L))$ and $\varrho(L)<\infty$,
\item there exist $n\in\mathbb{N}_0$ and idempotent elements $(p_j)_{j=1,\dots,n}\subseteq\A$ such that for $j=1,\dots,n$ the left ideals $\A p_j$ are maximal 
and $L=\bigcap_{j=1}^n \A p_j$.
\end{enumerate}
\end{theorem}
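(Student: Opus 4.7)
\emph{Proof proposal.} My plan is to establish the cycle (i) $\Rightarrow$ (ii) $\Rightarrow$ (i) $\Rightarrow$ (iii) $\Rightarrow$ (i). The central workhorses will be Proposition~\ref{ch_fred_idp}, which reduces Fredholm left ideals to those of the form $\A(1-p)$ with $p\in\soc(\A)$, together with the two elementary annihilator identities $\Ran(\A(1-p))=p\A$ and $\Lan(p\A)=\A(1-p)$ that hold for any idempotent $p$; both drop out by evaluating the annihilator conditions at $1\in\A$ and using $p^2=p$.

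For (i) $\Rightarrow$ (ii) I would write $L=\A(1-p)$ via Proposition~\ref{ch_fred_idp} and then combine the two identities to obtain $\Lan(\Ran(L))=\Lan(p\A)=\A(1-p)=L$; the finiteness $\varrho(L)<\infty$ follows from $p\in\soc(\A)$ via the right-sided analogues of Lemmas~\ref{soc_idmp} and~\ref{order_len}. The converse (ii) $\Rightarrow$ (i) essentially runs the same argument backwards: finiteness of $\varrho(L)$ forces $\Ran(L)$ to be of finite order, so the right-sided version of Lemma~\ref{soc_idmp} yields an idempotent $p\in\soc(\A)$ with $\Ran(L)=p\A$, and the hypothesis $L=\Lan(\Ran(L))$ immediately gives $L=\Lan(p\A)=\A(1-p)$, so $L$ is Fredholm.

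For (i) $\Rightarrow$ (iii), again start from $L=\A(1-p)$. The key step is to decompose the finite-order ideal $\A p$ as $\bigoplus_{j=1}^n\A p_j$ with orthogonal minimal idempotents $p_1,\dots,p_n$ satisfying $p=p_1+\cdots+p_n$; this is precisely what the inductive argument inside Lemma~\ref{order_len} produces. Setting $q_j:=1-p_j$, the isomorphism $\A/\A q_j\cong\A p_j$ shows each $\A q_j$ is a maximal left ideal, and a short calculation with orthogonality gives $x\in\bigcap_j\A q_j$ iff $xp_j=0$ for all $j$ iff $xp=0$ iff $x\in\A(1-p)$, so $L=\bigcap_j\A q_j$. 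The reverse implication (iii) $\Rightarrow$ (i) begins with the observation that, whenever $\A p_j$ is maximal and $p_j$ is idempotent, one has $\A=\A p_j\oplus\A(1-p_j)$ and hence $\A(1-p_j)\cong\A/\A p_j$ is a simple left ideal; thus $1-p_j$ is a minimal idempotent in $\soc(\A)$, making each $\A p_j=\A(1-(1-p_j))$ Fredholm by Proposition~\ref{ch_fred_idp}, and iterated application of Proposition~\ref{inter_fred} concludes.

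The step I expect to cost the most care is the orthogonal-minimal decomposition of the socle idempotent $p$ used in (i) $\Rightarrow$ (iii); everything else is either an invocation of a result already proved or a small idempotent manipulation of the kind already exercised in Propositions~\ref{ch_fred_idp} and~\ref{inter_fred}.
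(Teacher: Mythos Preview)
Your proposal is correct and close in spirit to the paper's proof, but the logical routing and one technical step differ in a way worth noting. The paper proves the cycle (i) $\Rightarrow$ (ii) $\Rightarrow$ (iii) $\Rightarrow$ (i); you instead prove (i) $\Leftrightarrow$ (ii) and (i) $\Leftrightarrow$ (iii) separately. Your (i) $\Rightarrow$ (ii), (ii) $\Rightarrow$ (i), and (iii) $\Rightarrow$ (i) are essentially the paper's arguments (your extra direct step (ii) $\Rightarrow$ (i) is a clean shortcut the paper does not bother with). The substantive difference is in reaching (iii): having written $L=\A(1-p)$, you decompose the \emph{left} ideal $\A p$ into an orthogonal sum $\bigoplus_j \A p_j$ with $p=\sum_j p_j$ and use orthogonality to verify $\bigcap_j \A(1-p_j)=\A(1-p)$. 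The paper instead stays on the right: it writes $\Ran(L)=q_1\A+\dots+q_n\A$ with minimal idempotents $q_j$ (no orthogonality needed) and then exploits the double-annihilator hypothesis from (ii) directly via
\[
L=\Lan(\Ran(L))=\Lan\Big(\sum_j q_j\A\Big)=\bigcap_j \Lan(q_j\A)=\bigcap_j \A(1-q_j),
\]
turning a sum into an intersection in one stroke. This avoids the ``step you expect to cost the most care'' entirely: the paper never needs the $p_j$ to be orthogonal or to sum to $p$. Your route is perfectly valid (the orthogonal decomposition is indeed what Barnes' Theorem~2.2 provides), just slightly heavier than necessary.
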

\begin{proof}
``$(i)\Rightarrow(ii)$'': This is a direct consequence of Proposition \ref{ch_fred_idp} and Lemma \ref{order_len}. \\
``$(ii)\Rightarrow(iii)$'': By Lemma \ref{order_len}, we know that $\Ran(L)$ has finite order and thus there exist minimal idempotent elements $q_1,\dots q_n\in\A$
such that $\Ran(L)=q_1\A+\dots+q_n\A$. We set $p_j\coloneqq 1-q_j$ for $j=1,\dots,n$ and note that $\A p_j$ are maximal left ideals. Finally, we can write
$$ L = \Lan(\Ran(L)) = \Lan(q_1\A+\dots+q_n\A) = \bigcap_{j=1}^n \A p_j. $$
``$(iii)\Rightarrow(i)$'': Note that $1-p_j$ are minimal idempotent elements for $j=1,\dots,n$ and hence $\A p_j$ are Fredholm left ideals by Proposition \ref{ch_fred_idp}.
The statement finally follows by Proposition \ref{inter_fred}.
\end{proof}

\begin{definition}
We call a left ideal of $\A$ semi-maximal if it can be written as an intersection of maximal left ideals.
\end{definition}

Next, we repeat some standard results of non-commutative ring theory:
\begin{proposition} \label{ch_semisimple}
Let $L\subseteq\A$ be a left ideal. Then the following assertions are equivalent:
\begin{enumerate}[label={(\roman*)}]
\item the left $\A$-module $\A/L$ is semisimple,
\item $L$ is semi-maximal and $\xi(L)<\infty$,
\item there exist $n\in\mathbb{N}_0$ and maximal ideals $m_1,\dots,m_n\subset\A$ such that $L=\bigcap_{j=1}^n m_j$.
\end{enumerate}
\end{proposition}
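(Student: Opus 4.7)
The plan is to prove the cyclic implications (i)$\Rightarrow$(iii)$\Rightarrow$(ii)$\Rightarrow$(i), which decouples the essential ring-theoretic argument from routine bookkeeping.

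For (i)$\Rightarrow$(iii), I would exploit that $\A$ is unital, so $\A/L$ is cyclic, generated by $[1]$. Writing $\A/L=\bigoplus_{i\in I}S_i$ as a direct sum of simple submodules, the component decomposition of $[1]$ has finite support, which forces the cyclic submodule $\A\cdot[1]=\A/L$ to lie in a finite subsum; hence $I$ must be finite, say $I=\{1,\dots,n\}$. Pulling the projections $\A/L\to S_j$ back along the quotient map then produces maximal left ideals $m_j\supseteq L$ with $\A/m_j\cong S_j$ and $L=\bigcap_{j=1}^n m_j$.

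The implication (iii)$\Rightarrow$(ii) is essentially bookkeeping: the representation $L=\bigcap_{j=1}^n m_j$ exhibits $L$ as semi-maximal, and the canonical injection $\A/L\hookrightarrow\bigoplus_{j=1}^n\A/m_j$ into a module of length $n$ immediately yields $\xi(L)\leq n<\infty$.

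The main obstacle is (ii)$\Rightarrow$(i), where the semi-maximal hypothesis a priori only guarantees $L=\bigcap_{i\in I}m_i$ for a possibly infinite family of maximal left ideals. My strategy is to leverage the finite-length assumption to collapse this to a finite intersection. For every finite $F\subseteq I$ set $L_F\coloneqq \bigcap_{i\in F}m_i$; the family $\{L_F/L\}$ of submodules of $\A/L$ is downward directed, as $L_{F_1\cup F_2}\subseteq L_{F_1}\cap L_{F_2}$. Since $\A/L$ has finite length, it is Artinian, so this family contains a minimal element $L_{F_0}/L$, and minimality then forces $L_{F_0}\subseteq L_F$ for every finite $F$, whence $L_{F_0}\subseteq\bigcap_F L_F=L$. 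Once $L=\bigcap_{i\in F_0}m_i$ is written as a \emph{finite} intersection, $\A/L$ embeds into the semisimple module $\bigoplus_{i\in F_0}\A/m_i$, and since submodules of semisimple modules are semisimple, (i) follows. The descent from arbitrary to finite intersections, via the Artinian property extracted from $\xi(L)<\infty$, is where the real work of the proposition is concentrated.
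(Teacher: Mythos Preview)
Your proof is correct. It differs from the paper's in two respects, though the underlying ideas are close.

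First, you cycle $(i)\Rightarrow(iii)\Rightarrow(ii)\Rightarrow(i)$, whereas the paper goes $(i)\Rightarrow(ii)\Rightarrow(iii)\Rightarrow(i)$. Your $(i)\Rightarrow(iii)$ is self-contained: you read off finiteness of the simple decomposition from the finite support of $[1]$ and pull back the projections directly. The paper instead deduces $(i)\Rightarrow(ii)$ by quoting that a semisimple module is $J$-semisimple (Lam), which is a shorter argument but relies on an external reference.

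Second, the core step---passing from an arbitrary intersection of maximal left ideals to a finite one---is handled differently. The paper builds a strictly descending chain $L_1\supsetneq L_2\supsetneq\cdots$ by recursively adjoining a maximal left ideal $m_j\supseteq L$ not containing $L_{j-1}$, and uses $\xi(L)<\infty$ to force termination at some $L_n=L$. You instead invoke the Artinian property of $\A/L$ to pick a minimal element in the downward-directed family $\{L_F/L\}_F$ and show it must be zero. Both arguments are essentially the same descending-chain idea; your formulation is slightly slicker in that it avoids the recursive case distinction, while the paper's is more explicitly constructive (it names the $m_j$ as it goes).
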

\begin{proof}
``$(i)\Rightarrow(ii)$'': As $\A/L$ is semisimple and $\A/L$ is finitely generated ($[1]\in\A/L$), we see that $\len_\A{\A/L}<\infty$.
Further, $\A/L$ is J-semisimple (see \cite{lam2013first}, (4.14) Theorem) and thus $L$ 
can be written as the intersection of all maximal left ideals that contain $L$. \\
``$(ii)\Rightarrow(iii)$'': WLOG we have $L\subsetneq\A$. Then $L$ is contained in a maximal left ideal $m_1\subset\A$ and we set $L_1\coloneqq m_1$.
For $j\geq 2$, we choose $m_j$ recursively and distinguish two cases: 
If $L_{j-1}\subseteq m$ for all maximal left ideals $L\subseteq m\subset\A$, we terminate. Otherwise we can pick a maximal left ideal $L\subseteq m_j\subset\A$ 
with $L_{j-1}\nsubseteq m_j$ and set $L_j\coloneqq L_{j-1} \cap m_j$. This procedure terminates after finitely many iterations as
$$ L \subseteq\dots\subsetneq L_2\subsetneq L_1$$
and $\len_\A{\A/L}<\infty$. Let $n$ be one less the number of iterations. Then, we obtain $L=L_n$ since $L$ is semi-maximal. \\
``$(iii)\Rightarrow(i)$'': We simply note that there is an injective $\A$-linear map $\A/L\to\oplus_{j=1}^n\A/m_j$.
\end{proof}

\begin{corollary}
If $L\subseteq\A$ is a Fredholm left ideal, then $\A/L$ is a semisimple $\A$-module.
\end{corollary}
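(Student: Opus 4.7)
The plan is to chain together the two most recent structural results in the section, namely Theorem \ref{ch_fred_th} and Proposition \ref{ch_semisimple}. Both characterizations include a clause of the shape ``$L$ is a finite intersection of maximal left ideals of a specified form,'' and it is precisely this shared clause that I would use as the bridge.

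First I would invoke the implication $(i)\Rightarrow(iii)$ of Theorem \ref{ch_fred_th}: since $L$ is Fredholm, we obtain idempotents $p_1,\dots,p_n\in\A$ such that each $\A p_j$ is a maximal left ideal and $L=\bigcap_{j=1}^n \A p_j$. Then I would feed this directly into the implication $(iii)\Rightarrow(i)$ of Proposition \ref{ch_semisimple}, taking $m_j\coloneqq \A p_j$, to conclude that $\A/L$ is a semisimple $\A$-module.

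There is really no obstacle to overcome here; the corollary is a formal consequence of the two results cited, and the only thing worth emphasizing in the write-up is that the particular maximal left ideals produced by Theorem \ref{ch_fred_th} (those complementary to minimal idempotents) are admissible as the $m_j$ in Proposition \ref{ch_semisimple}, which is trivially true because the latter imposes no condition on the $m_j$ beyond maximality.
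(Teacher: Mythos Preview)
Your argument is correct and is precisely the intended one: the corollary is placed immediately after Proposition~\ref{ch_semisimple} because Theorem~\ref{ch_fred_th}\,(iii) feeds directly into Proposition~\ref{ch_semisimple}\,(iii), exactly as you describe. There is nothing to add.
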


The following result due to B. A. Barnes states that a unital ring where all maximal left ideals are Fredholm is semisimple.
We shall later see that the situation can be very different for principal one-sided ideals.

\begin{proposition}
The following assertions are equivalent:
\begin{enumerate}[label={(\roman*)}]
\item $\A$ is a unital right annihilator ring,
\item every left ideal is Fredholm,
\item $\A$ is semisimple,
\item every maximal left ideal is Fredholm.
\end{enumerate}
\end{proposition}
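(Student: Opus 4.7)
My plan is to close the equivalence as a cycle $(ii)\Rightarrow(iv)\Rightarrow(i)\Rightarrow(iii)\Rightarrow(ii)$. The first implication is trivial by specialization. For $(iv)\Rightarrow(i)$, observe that in a unital ring every maximal left ideal is modular, so I may apply Proposition \ref{ch_fred_idp} to a maximal left ideal $m$: it has the form $m=\A(1-p)$ for some idempotent $p\in\soc(\A)$, necessarily nonzero (else $m=\A$), and right-multiplying any $y=z(1-p)\in m$ by $p$ gives $yp=0$, so $p\in\Ran(m)\setminus\{0\}$.

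The main step is $(i)\Rightarrow(iii)$. I would first show that every maximal left ideal $m\subseteq\A$ is of the form $\A(1-p)$ for a minimal idempotent $p$. Using (i), pick $0\neq a\in\Ran(m)$; since $1\cdot a=a\neq 0$, we have $\Lan(a)\neq\A$, but $m\subseteq\Lan(a)$, so by maximality $m=\Lan(a)$. Hence the map $x\mapsto xa$ induces an isomorphism $\A/m\cong\A a$, making $\A a$ a simple module, i.e., a minimal left ideal. By Lemma \ref{soc_idmp}, $\A a=\A p$ for some minimal idempotent $p$, and the identity $\Lan(a)=\Lan(\A a)=\Lan(\A p)=\A(1-p)$ yields $m=\A(1-p)$. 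To upgrade this to semisimplicity, suppose toward contradiction that $\soc(\A)\subsetneq\A$; Zorn's lemma provides a maximal left ideal $m$ containing $\soc(\A)$, and the associated minimal idempotent $p$ then lies in $\soc(\A)\subseteq m=\A(1-p)$. Writing $p=x(1-p)$ and multiplying on the right by $p$ forces $p=p^2=0$, contradicting minimality. Hence $\A=\soc(\A)$, which together with $\A=\A\cdot 1$ makes $\A$ a finite sum of minimal left ideals, i.e., semisimple.

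Finally, for $(iii)\Rightarrow(ii)$, semisimplicity of $\A$ gives $\len_\A(\A)<\infty$, so every quotient $\A/L$ is a semisimple module of finite length. By Proposition \ref{ch_semisimple}, any left ideal $L$ is a finite intersection $\bigcap_{j=1}^n m_j$ of maximal left ideals. Each $m_j$ is a proper left ideal of the semisimple ring $\A=\soc(\A)$, hence is a direct summand of the form $\A(1-p_j)$ with $p_j$ a minimal idempotent, and Proposition \ref{ch_fred_idp} makes $m_j$ Fredholm. Proposition \ref{inter_fred} then gives that $L$ itself is Fredholm.

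The main obstacle is the passage from the bare annihilator condition (i) to the rigid structural conclusion that every maximal left ideal is generated by a minimal idempotent; once this is established, the Zorn-plus-idempotent-calculus argument to force $\soc(\A)=\A$ is short, and the remaining implications are largely bookkeeping via Propositions \ref{ch_fred_idp}, \ref{inter_fred}, and \ref{ch_semisimple}.
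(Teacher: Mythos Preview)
The paper does not supply its own argument for this proposition; it simply cites Barnes' original papers \cite{barnes_1966,barnes_1969}. Your cycle $(ii)\Rightarrow(iv)\Rightarrow(i)\Rightarrow(iii)\Rightarrow(ii)$ is a sensible route, and the steps $(ii)\Rightarrow(iv)$, $(iv)\Rightarrow(i)$, and $(iii)\Rightarrow(ii)$ go through as you wrote them.

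The step $(i)\Rightarrow(iii)$, however, has a genuine gap. The chain
\[
\Lan(a)=\Lan(\A a)=\Lan(\A p)=\A(1-p)
\]
is false in general: for any left ideal $S$ one only has $\Lan(S)\subseteq\Lan(s)$ for $s\in S$, and moreover $\Lan(\A p)$ is automatically a \emph{two-sided} ideal (since $\A p$ is a left ideal), whereas $\A(1-p)$ usually is not. Concretely, in $\A=M_2(\mathbb C)$ take $m=\{x:x_{11}=x_{21}=0\}$ and $a=E_{12}\in\Ran(m)$. Then $\A a=\A E_{22}$, so your recipe produces $p=E_{22}$ and $\A(1-p)=\A E_{11}=\{x:x_{12}=x_{22}=0\}$, which is \emph{not} $m$. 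The difficulty is that the idempotent generating the \emph{left} ideal $\A a$ need not lie in $\Ran(m)$.

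The repair is to pass to the right-hand side. Since $\A a$ is a minimal left ideal you have $a\in\soc(\A)$; by the lemma immediately preceding Proposition~\ref{ch_fred_len} the right ideal $a\A$ then also has order~$1$, so Lemma~\ref{soc_idmp} yields a minimal idempotent $q$ with $q\A=a\A\subseteq\Ran(m)$. Now $q\in\Ran(m)$ gives $m\subseteq\Lan(q)=\A(1-q)$, and maximality forces $m=\A(1-q)$. With this correction your Zorn--idempotent argument for $\soc(\A)=\A$ and the remainder of the cycle are valid.
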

\begin{proof}
(see \cite{barnes_1966}, Theorem 4.3) and (see \cite{barnes_1969}, Theorem 2.4)
\end{proof}

\section{Weak-Fredholm Elements}

In the entire section, $\A$ denotes a unital ring.
Note that the following definition is actually weaker than the definition of Fredholm elements.

\begin{definition}
Let $a\in\A$. Then we call $a$ weak$^+$-Fredholm if $\xi_l(a)<\infty$.
Similarly, we call $a$ weak$^-$-Fredholm if $\xi_r(a)<\infty$.
Moreover, we call an element weak-Fredholm if it is both weak$^+$-Fredholm and weak$^-$-Fredholm.
\end{definition}

\begin{example}
Suppose $\A$ is a commutative unital Banach algebra. Then an element $a\in\A$ is weak-Fredholm if and only if 
the multiplication operator $M_a\colon\A\to\A,\,x\mapsto xa$ is semi$^-$-Fredholm. Further, $\xi_l(a)=\xi_r(a)=\dim(\coker M_a)$.
\end{example}
\begin{proof}
Suppose $M_a$ is semi$^-$-Fredholm. Then, it is clear that $\xi_l(a)=\xi_r(a)<\infty$. \\
Conversely, assume that $a$ is weak-Fredholm. Then, there exists a composition series
$$ \A a = I_0 \subsetneq I_1 \dots \subsetneq I_n = \A $$
such that the quotients $I_j/I_{j-1}$ are simple $\A$-modules for $j=1,\dots,n$ and $n=\xi_l(a)$. Hence, there exist maximal ideals $m_j\subset\A$
satisfying $I_j/I_{j-1}\cong\A/m_j$. Since $\A$ is a unital Banach algebra
$$ I_j/I_{j-1}\cong\A/m_j\cong\mathbb{C}$$
for $j=1,\dots,n$ and thus $n=\len_\mathbb{C}(\A/\A a)=\dim(\coker M_a)$.
\end{proof}

\begin{proposition}
Suppose $a,b\in\A$ are weak$^+$-Fredholm. Then $ab$ is again weak$^+$-Fredholm and $\xi_l(ab)\leq\xi_l(a)+\xi_l(b)$.
\end{proposition}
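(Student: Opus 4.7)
The plan is to exploit the obvious inclusion $\A ab\subseteq\A b$ (valid because $xab=(xa)b\in\A b$ for every $x\in\A$; note that the reverse-sounding inclusion $\A ab\subseteq\A a$ does \emph{not} hold in general) in order to set up a short exact sequence of left $\A$-modules
$$ 0\to\A b/\A ab\to\A/\A ab\to\A/\A b\to 0, $$
where the first map is induced by inclusion and the second by the quotient. Additivity of length on short exact sequences will then reduce the problem to bounding $\len_\A(\A b/\A ab)$ by $\xi_l(a)$.

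To produce that bound I would consider the right-multiplication map $\nu_b\colon\A\to\A b/\A ab$, $x\mapsto[xb]$. This is left $\A$-linear since $\nu_b(yx)=yxb=y\nu_b(x)$, and it is clearly surjective. I claim $\A a\subseteq\ker\nu_b$: for $x=ya\in\A a$, the image $\nu_b(x)=yab$ lies in $\A ab$, so $[yab]=0$. Hence $\nu_b$ factors through a surjection $\A/\A a\twoheadrightarrow\A b/\A ab$, and consequently
$$ \len_\A(\A b/\A ab)\leq\len_\A(\A/\A a)=\xi_l(a)<\infty. $$

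Combining this with the short exact sequence yields
$$ \xi_l(ab)=\len_\A(\A/\A ab)=\len_\A(\A b/\A ab)+\len_\A(\A/\A b)\leq\xi_l(a)+\xi_l(b), $$
which simultaneously gives finiteness of $\xi_l(ab)$ (so $ab$ is weak$^+$-Fredholm) and the claimed inequality.

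The main obstacle is really just the initial setup: one has to notice that $ab$ left-divides into $\A b$ rather than $\A a$, and so the correct filtration of $\A/\A ab$ passes through $\A b$. Once the short exact sequence is in place the argument is routine, relying only on the standard additivity of length and the elementary observation that right multiplication by $b$ is a morphism of left $\A$-modules sending $\A a$ into $\A ab$.
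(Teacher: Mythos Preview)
Your proof is correct and follows essentially the same approach as the paper. The paper packages the argument as a single exact sequence $\A/\A a \xrightarrow{\overline{\phi}} \A/\A ab \to \A/\A b \to 0$ with $\overline{\phi}$ induced by $x\mapsto xb$, whereas you separate this into the short exact sequence $0\to\A b/\A ab\to\A/\A ab\to\A/\A b\to 0$ together with the surjection $\A/\A a\twoheadrightarrow\A b/\A ab$; the content is identical.
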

\begin{proof}
Consider the map $\phi\colon\A\to\A/\A ab\,,x\mapsto [xb]$.
As $\A a\subseteq\ker\phi$, there exists an $\A$-linear map $\overline{\phi}\colon\A/\A a\to\A/\A ab$ 
with $\im\overline{\phi} = \A b/\A ab$. Thus, the following sequence is exact:
$$ \begin{tikzcd} \A/\A a \arrow[r, "\overline{\phi}"] & \A/\A ab \arrow[r]  & \A/\A b \arrow[r]  & 0 \end{tikzcd} $$
As the length is additive in short exact sequences, the claim follows.
\end{proof}

In general, it is not always true that a weak$^+$-Fredholm element $a\in\A$ satisfies $\varrho_l(a)<\infty$. 
In the following, we want to find sufficient conditions that ensure $\varrho(L)<\infty$ for a left ideal $L\subseteq\A$.

\begin{lemma} \label{lemma_semi_min}
Let $L\subsetneq\A$ be a left ideal such that $\A/L$ is a semisimple $\A$-module. Then, there exist $q\in\A\setminus L$ such that 
$L+\A(1-q)$ is a maximal left ideal and $Lq\subseteq L$.
\end{lemma}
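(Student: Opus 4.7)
The plan is to exploit the semisimplicity of $\A/L$ to split the identity as $1 = m + n$, where $n$ lies in a suitably chosen maximal left ideal $N$ containing $L$ and $m$ lies in a complementary submodule, and then take $q \coloneqq m$. The key technical ingredient is the decomposition of $\A/L$ into $N/L$ and a complement: this is what forces $Lq$ to land inside both halves.

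More concretely, since $L \subsetneq \A$ and $\A/L$ is a nonzero semisimple $\A$-module, Proposition~\ref{ch_semisimple} guarantees a maximal left ideal $N \subset \A$ with $L \subseteq N$. The submodule $N/L$ of the semisimple module $\A/L$ admits a complement, so there exists a left ideal $M$ with $L \subseteq M \subseteq \A$, $M + N = \A$, and $M \cap N = L$. Decompose $1 = m + n$ with $m \in M$ and $n \in N$, and set $q \coloneqq m$, so that $1 - q = n$.

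Verifying the three required properties is then a direct calculation. First, $q \notin L$: otherwise $1 = q + n \in L + N = N$, contradicting $N \subsetneq \A$. Second, $L + \A(1-q) = N$: the inclusion $\subseteq$ is clear since $1 - q = n \in N$ and $L \subseteq N$; conversely, any $x \in N$ splits as $x = xm + xn$, where $xm \in M$ and $xm = x - xn \in N$, hence $xm \in M \cap N = L$, while $xn \in \A(1-q)$. So $x \in L + \A(1-q)$, and $L + \A(1-q) = N$ is maximal. Third, for $l \in L$ we have $lq = lm \in M$; but also $lq = l - ln$ with $l \in L \subseteq N$ and $ln \in \A n \subseteq N$, so $lq \in N$. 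Thus $lq \in M \cap N = L$, giving $Lq \subseteq L$.

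The only real subtlety is the last property: for a generic element $q' \in \A \setminus L$ with $L + \A(1-q')$ maximal, there is no reason $Lq' \subseteq L$ should hold. The point of choosing $q$ as the $M$-component of $1$ in the direct-sum decomposition $\A/L = M/L \oplus N/L$ is precisely that left multiplication by $q$ is, modulo $L$, the projection onto $M/L$, which then automatically sends $L$ into itself.
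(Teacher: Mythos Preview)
Your proof is correct and follows essentially the same approach as the paper: split $\A/L$ as a simple submodule plus a complement, take $q$ so that $[q]$ is the simple component of $[1]$, and verify the three properties directly. The only cosmetic difference is that the paper first picks a minimal submodule $N\subseteq\A/L$ and complements it, whereas you first pick a maximal left ideal $N\supseteq L$ (so $N/L$ is the maximal submodule) and complement that; your $M/L$ is the paper's $N$ and your $N/L$ is the paper's $M$, and you carry out the computations with left ideals in $\A$ rather than with cosets in $\A/L$. (Incidentally, the appeal to Proposition~\ref{ch_semisimple} is unnecessary: in a unital ring every proper left ideal is contained in a maximal one by Zorn's lemma.)
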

\begin{proof}
$\A/L$ contains a minimal submodule $N\subseteq\A/L$. As $\A/L$ is semisimple there exists a submodule $M\subseteq\A/L$ such that $N\oplus M = \A/L$.
Thus, we can write $[1]=[q]+[m]$ where $[q]\in N$ and $[m]\in M$. Clearly, $[q]\neq0$ and hence $q\in\A\setminus L$.
For $x\in L$ we have $[0]=[x]=[xq]+[xm]$. Since the sum is direct, this yields $[xq]=0$ and thus $xq\in L$. This shows $Lq\subseteq L$.
In order to show that $L+\A(1-q)$ is a maximal left ideal, we prove
$$ M = \{ a[1-q]: a\in\A \}. $$
Evidently, $[1-q]=[m]\in M$. Conversely, let $[x]\in M$. Then $[xq]=[x]-[xm]\in M$ and therefore $[xq]=0$. Thus $[x]=[xm]=x[1-q]$.
\end{proof}

\begin{theorem}
Let $\A$ be a J-semisimple unital ring and $L\subseteq\A$ be a left ideal such that $\A/L$ is a semisimple $\A$-module. Then $\varrho(L)<\infty$.
\end{theorem}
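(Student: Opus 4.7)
I will use induction on $n := \xi(L) = \len_\A(\A/L)$, which is finite by Proposition \ref{ch_semisimple}. The case $n = 0$ is immediate: $L = \A$ forces $\Ran(L) = 0$. For $n \geq 2$ I intend to reduce to smaller length via Lemma \ref{lemma_semi_min}, while the case $n = 1$ (where $L$ is maximal) needs to be treated as a separate base case, because the reduction makes no progress when $L$ is already maximal.

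In the inductive step ($n \geq 2$), I apply Lemma \ref{lemma_semi_min} to get $q \in \A \setminus L$ with $L_1 := L + \A(1 - q)$ maximal and $Lq \subseteq L$, and put $L_2 := L + \A q$. The direct sum $\A/L = N \oplus M$ from the proof of the lemma (with $[q] \in N$ and $[1-q] \in M$) then gives $L = L_1 \cap L_2$, $L_1 + L_2 = \A$ (since $q \in L_2$, $1 - q \in L_1$), and $\A/L_2 \cong M$ semisimple of length $n - 1$. I pick $e_1 \in L_1$, $e_2 \in L_2$ with $e_1 + e_2 = 1$. For every $x \in L_1$, the element $xe_2 = x - xe_1$ lies in $L_1$ (because $xe_1 \in \A L_1 \subseteq L_1$) and in $L_2$ (because $xe_2 \in \A e_2 \subseteq L_2$), hence in $L$. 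Therefore $L_1 \cdot (e_2 a) \subseteq L \cdot a = 0$ for each $a \in \Ran(L)$, showing $e_2 a \in \Ran(L_1)$; symmetrically $e_1 a \in \Ran(L_2)$, so $a = e_1 a + e_2 a$ realises $\Ran(L)$ as the sum $\Ran(L_1) + \Ran(L_2)$. Subadditivity of length yields $\varrho(L) \leq \varrho(L_1) + \varrho(L_2) < \infty$, using the base case for $L_1$ and the inductive hypothesis for $L_2$.

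For the base case $n = 1$, assume $\Ran(L) \neq 0$ and pick $0 \neq a \in \Ran(L)$. Maximality of $L$ gives $\Lan(a) = L$, making $\A a \cong \A/L$ a minimal left ideal of $\A$, and since $J(\A) = 0$ implies $\A$ is semiprime, Brauer's lemma produces a minimal idempotent $e$ with $\A a = \A e$. I write $a = ye$ and $e = za$; then $(yz)a = y(za) = ye = a$ and $(ez)a = e(za) = e^2 = e = za$, so $yz - 1$ and $ez - z$ both lie in $\Lan(a) = L$. For any $b \in \Ran(L)$ this gives $yzb = b$ and $ezb = zb$, hence
\[
a \cdot (zb) = ye \cdot zb = y(ezb) = y(zb) = yzb = b,
\]
so $\Ran(L) \subseteq a\A$. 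Since $a\A$ is a minimal right ideal (being the image of the simple right ideal $e\A$ under the nonzero $\A$-linear map $c \mapsto yc$), we conclude $\Ran(L) = a\A$ and $\varrho(L) = 1$.

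The main obstacle I anticipate is the base case: even when $\A a = \A e$ as left ideals, the annihilators $\Lan(a)$ and $\Lan(e)$ can genuinely differ (as $a = e_{21}$, $e = e_{11}$ in $M_2(\mathbb{C})$ already shows), so one cannot simply replace $a$ by $e$ in the description of $\Ran(L)$ and must keep careful track of the auxiliary elements $y$ and $z$. Everything else is a Chinese-remainder-type decomposition for left ideals, enabled by $L_1 + L_2 = \A$.
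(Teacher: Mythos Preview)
Your proof is correct and shares the paper's overall architecture: induction on $n = \xi(L)$, with Lemma~\ref{lemma_semi_min} driving the step $n \geq 2$ through a decomposition $\Ran(L) = \Ran(L_1) + \Ran(L_2)$. Your Chinese-remainder framing with $e_1 + e_2 = 1$ is just a mild abstraction of the paper's explicit splitting $x = (1-q)x + qx$; the two arguments are interchangeable once one notes that $q$ and $1-q$ themselves serve as $e_2$ and $e_1$.

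The genuine difference lies in the base case $n = 1$. The paper argues straight from $J(\A) = 0$: given any nonzero right ideal $R \subseteq \Ran(L)$ and $0 \neq x \in R$, there is $b \in \A$ with $1 - xb \notin \A^\times$; since $L \subseteq \A(1-xb) \subsetneq \A$, maximality forces $1 - xb \in L$, whence every $r \in \Ran(L)$ satisfies $r = xbr \in x\A \subseteq R$. Thus $\Ran(L)$ has no proper nonzero right subideals and $\varrho(L) \leq 1$. This is shorter, avoids idempotents entirely, and uses the J-semisimplicity hypothesis in its rawest form. Your route through Brauer's lemma and the minimal idempotent $e$ is equally valid, but it trades that directness for additional structural input (semiprimeness, the left--right symmetry of minimal idempotents) and the careful bookkeeping with $y$ and $z$ that you rightly flag as the delicate point.
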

\begin{proof}
We by proof the statement by induction on $n\coloneqq\len_\A\A/L$. The case $n=0$ is trivial. \\
If $n=1$, then $L$ is a maximal left ideal. Let $0\neq R\subseteq\Ran(L)$ be a right ideal and choose $0\neq x\in R$.
As $\A$ is J-semisimple, there exists $b\in\A$ such that $1-xb\notin \A^\times$. Clearly, we have
$$ L\subseteq \A(1-xb) \subsetneq \A. $$
By the maximality of $L$, we get $L=\A(1-xb)$ and thus $1-xb\in L$. This yields $R=\Ran(L)$ and we obtain $\len(\Ran(L))\leq 1$. \\ 
Now, assume $n\geq 2$. Lemma \ref{lemma_semi_min} yields the existence of an element $q\in\A$ as described above. In order to get
$$ \Ran(L) = [\Ran(L)\cap(1-q)\A]+[\Ran(L)\cap q\A] = \Ran(L+\A q) + \Ran(L+\A(1-q)), $$
we only need to show $\Ran(L)\subseteq[\Ran(L)\cap(1-q)\A]+[\Ran(L)\cap q\A]$. Indeed, let $x\in\Ran(L)$. We can write
$ x = (1-q)x+qx $. Since $aq\in L$ for all $a\in L$, we obtain $(1-q)x,qx\in\Ran(L)$ and thus the inclusion holds.
Now, apply the induction hypothesis.
\end{proof}

\begin{definition}
We say a semiprime ring $\A$ has essential socle if $I\cap\soc(\A)=0$ already implies $I=0$ for all ideals $I\subseteq\A$.
\end{definition}

Let us recall some basic characterizations of semiprime rings with essential socle:

\begin{proposition} \label{ch_ess_soc}
Suppose $\A$ is semiprime. Then, the following statements are equivalent:
\begin{enumerate}[label={(\roman*)}]
\item the ring $\A$ has essential socle,
\item for every $a\in\A$ with $a\soc(\A)=0$, it already holds that $a=0$,
\item for every $a\in\A$ with $\soc(\A)a=0$, it already holds that $a=0$.
\end{enumerate}
\end{proposition}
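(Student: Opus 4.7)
The plan is to establish the equivalence (i) $\Leftrightarrow$ (ii) directly, and then note that (i) $\Leftrightarrow$ (iii) is the exact left--right mirror of the same argument. Throughout I would lean on two standard facts about the socle of a semiprime ring: $\soc(\A)$ is a two-sided ideal of $\A$, and no nonzero ideal of $\A$ has square zero.

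For (i) $\Rightarrow$ (ii), I would introduce the annihilator $J \coloneqq \Lan(\soc(\A))$. My first observation is that $J$ is in fact a two-sided ideal: for $b \in J$ and $c \in \A$, the relation $(bc)\soc(\A) = b(c\,\soc(\A)) \subseteq b\,\soc(\A) = 0$ holds because $\soc(\A)$ is a left ideal, and $(cb)\soc(\A) = c(b\,\soc(\A)) = 0$ is immediate. By construction $J\cdot\soc(\A) = 0$, so $(J\cap\soc(\A))^2 \subseteq J\cdot\soc(\A) = 0$. Semiprimeness of $\A$ forces $J\cap\soc(\A) = 0$, and then (i) yields $J = 0$. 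Any $a \in \A$ with $a\soc(\A) = 0$ lies in $J$ and therefore vanishes.

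For (ii) $\Rightarrow$ (i), I would pick any ideal $I \subseteq \A$ with $I \cap \soc(\A) = 0$. For each $a \in I$ the product $a\soc(\A)$ sits in $I$ (because $I$ is a right ideal) and in $\soc(\A)$ (because $\soc(\A)$ is a left ideal), hence $a\soc(\A) \subseteq I \cap \soc(\A) = 0$. Condition (ii) then forces $a = 0$, so $I = 0$.

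The equivalence (i) $\Leftrightarrow$ (iii) goes through by the same two arguments with the roles of ``left'' and ``right'' exchanged, substituting $\Ran(\soc(\A))$ for $J$ and using $\soc(\A)\,a \subseteq I\cap\soc(\A)$ for $a$ in a two-sided ideal $I$. I do not expect a real obstacle: the one point deserving attention is the two-sidedness of $\Lan(\soc(\A))$ (and its right-handed counterpart), which in turn rests on $\soc(\A)$ being two-sided in a semiprime ring — and it is exactly at this step, together with the nilpotent-ideal argument, that semiprimeness enters essentially.
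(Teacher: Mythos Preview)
Your argument is correct. The implication $(ii)\Rightarrow(i)$ (and its mirror) is identical to the paper's, but your $(i)\Rightarrow(ii)$ is genuinely different. The paper proceeds element-wise: given $a$ with $\soc(\A)a=0$, it forms the two-sided ideal $I$ generated by $a$, and to show $I\cap\soc(\A)=0$ it invokes Lemma~\ref{soc_idmp} (Barnes' idempotent lemma) to produce an idempotent $p\in\soc(\A)$ with $\A p=\A x$ for each $x\in I\cap\soc(\A)$, then computes $p=p^2=\sum (px_j)ay_j=0$ using $px_j\in\soc(\A)$. You instead treat the whole annihilator $J=\Lan(\soc(\A))$ at once and kill $J\cap\soc(\A)$ via the nilpotent-ideal criterion for semiprimeness, bypassing the idempotent machinery entirely. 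Your route is shorter, more self-contained, and does not rely on the unital hypothesis or on Lemma~\ref{soc_idmp}; the paper's route, on the other hand, illustrates how the idempotent structure of the socle can be used concretely, which is in keeping with the rest of the section.
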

\begin{proof}
``$(i)\Rightarrow(iii)$'': Assume $\soc(\A)a=0$ and let $I\subseteq\A$ be the ideal generated by $a$.
We want to show that $I\cap\soc(\A)=0$: Indeed, let $x\in I\cap\soc(\A)$. 
Lemma \ref{soc_idmp} yields that there exists an idempotent $p\in\soc(\A)$ such that $\A x = \A p$.
Thus $p\in I$ and there exist elements $x_1,\dots, x_n\in\A$ and $y_1,\dots, y_n\in\A$ such that
$$ p = p^2 = p \sum_{j=1}^n x_j a y_j = \sum_{j=1}^n (px_j) a y_j = 0 $$
as $px_j\in\soc(\A)$ for $j=1,\dots,n$. We hence see that $I\cap\soc(\A)=0$ and as $\A$ has essential socle we conclude $a=0$. \\
``$(iii)\Rightarrow(i)$'': Let $I\subseteq\A$ be an ideal with $I\cap\soc(\A)=0$. Then we pick an arbitrary $a\in I$ and note that 
$\soc(\A)a\subseteq I\cap\soc(\A)=0$. Then, we already get $a=0$ and thus $I=0$. \\
``$(i)\iff(ii)$'': This is done analogously.
\end{proof}

\begin{lemma} \label{soc_bound_len}
Assume $\A$ has essential socle and let $R\subseteq\A$ be a right ideal. If there exists an upper bound $M\geq0$
such that for all idempotent elements $p\in\soc(\A)\cap R$ we have $\len_\A(p\A)\leq M$, then $\len_\A(R)\leq M$.
\end{lemma}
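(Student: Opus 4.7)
The plan is to first establish that $\len_\A(R\cap\soc(\A))\leq M$, and then to strengthen this by showing $R=R\cap\soc(\A)$; the desired bound $\len_\A(R)\leq M$ is then immediate.

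For the first reduction, I would take any strict chain $0=F_0\subsetneq F_1\subsetneq\ldots\subsetneq F_n$ of right sub-ideals of $R\cap\soc(\A)$, pick $s_j\in F_j\setminus F_{j-1}$, and form the finitely generated right sub-ideal $F\coloneqq\sum_{j=1}^n s_j\A$. Each $s_j\A$ is a right sub-ideal of $\soc(\A)$, hence a submodule of a semisimple module of finite length and therefore of finite order; so $F$ itself is of finite order, and Lemma \ref{soc_idmp} provides an idempotent $p\in\soc(\A)$ with $p\A=F\subseteq R$. The chain $\{F\cap F_j\}_j$ is still strict, so $n\leq\len_\A(F)=\len_\A(p\A)\leq M$ by Lemma \ref{order_len} and the hypothesis. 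Hence $\len_\A(R\cap\soc(\A))\leq M$.

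In particular $R\cap\soc(\A)$ is of finite order, so Lemma \ref{soc_idmp} yields an idempotent $e\in\soc(\A)\cap R$ with $e\A=R\cap\soc(\A)$. I then claim $R\subseteq e\A$. For any $r\in R$ the product $re$ lies in $R\cap\A e\subseteq R\cap\soc(\A)=e\A$, so $re=ere$. More generally, for every $t\in\soc(\A)$ both $rt$ and $ret$ belong to $R\cap\soc(\A)=e\A$, giving
$$(1-e)\,r(1-e)\,t = (1-e)(rt-ret) = 0.$$
Therefore $(1-e)r(1-e)$ annihilates $\soc(\A)$ on the right, and Proposition \ref{ch_ess_soc}(ii) forces $(1-e)r(1-e)=0$; equivalently $r(1-e)=er(1-e)$. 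Combining this with $re=ere$ yields $r=re+r(1-e)=ere+er(1-e)=er\in e\A$. Thus $R=e\A$ and $\len_\A(R)=\len_\A(e\A)\leq M$.

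The main obstacle is the algebraic step just described: using essentiality of the socle to upgrade the relation $re\in e\A$ (which every $r\in R$ enjoys automatically) into $r\in e\A$ itself. The key idea is to apply Proposition \ref{ch_ess_soc}(ii) not to $r$ but to the corner $(1-e)r(1-e)$, which is visibly killed by all of $\soc(\A)$; everything else is routine bookkeeping with Lemmas \ref{soc_idmp} and \ref{order_len}.
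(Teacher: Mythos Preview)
Your proof is correct and takes a genuinely different route from the paper's. The paper argues in a single stroke: among all idempotents $p\in\soc(\A)\cap R$ pick one with $\len_\A(p\A)$ maximal (such a maximum exists by the bound $M$), and suppose for contradiction that $p\A\subsetneq R$. Then some $r\in R$ has $(1-p)r\neq0$, so by essential socle $(1-p)r\,\soc(\A)$ contains a nonzero element $x\in R\cap\soc(\A)$; enlarging $p\A$ to $p\A+x\A=q\A$ with $q\in\soc(\A)\cap R$ contradicts maximality because $x\in(1-p)\A$. Your argument instead splits the problem into two independent pieces: first a chain-to-idempotent reduction bounding $\len_\A(R\cap\soc(\A))$ by $M$, and second the standalone fact that, under essential socle, any right ideal whose intersection with the socle has finite order already lies inside the socle. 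Both proofs invoke Proposition~\ref{ch_ess_soc}(ii), but in opposite ways: the paper uses it to \emph{produce} a nonzero socle element, whereas you use it to \emph{annihilate} one. The paper's route is shorter; yours is more modular and isolates a lemma of independent interest. (Incidentally, your second step can be streamlined: from $rt\in R\cap\soc(\A)=e\A$ for every $t\in\soc(\A)$ one obtains $(1-e)r\,\soc(\A)=0$ directly, hence $(1-e)r=0$ and $r=er\in e\A$; the detour through the corner $(1-e)r(1-e)$ and the auxiliary relation $re=ere$ is unnecessary.)
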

\begin{proof}
Choose an idempotent $p\in\soc(\A)\cap R$ such that $\len_\A(p\A)$ is maximal and suppose $p\A\subsetneq R$.
Then, there exists $r\in R\setminus p\A$. We write $r = pr+(1-p)r$ and deduce that $0\neq (1-p)r\in R$.
Proposition \ref{ch_ess_soc} yields that there exists $0\neq x\in(1-p)r\soc(\A)\subseteq R$. By Lemma \ref{soc_idmp}, there exists an idempotent 
$q\in\soc(\A)$ such that $q \A = x \A + p \A$. As $q\in R$
and $p \A\subseteq q\A$, we obtain $p\A=q\A$ by the maximality of $\len_\A(p\A)$. But then $x \in p\A\cap(1-p)\A=0$ 
and we reach a contradiction!
\end{proof}

\begin{theorem}
Assume $\A$ has essential socle. If $L\subseteq\A$ is a left ideal such that $\xi(L)<\infty$, then $\varrho(L)\leq\xi(L)<\infty$.
\end{theorem}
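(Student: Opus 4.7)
The plan is to reduce the estimate to Lemma \ref{soc_bound_len}, applied to the right ideal $R \coloneqq \Ran(L)$. That lemma converts the desired inequality $\len_\A(\Ran(L)) \leq \xi(L)$ into the problem of producing a uniform bound $\len_\A(p\A) \leq \xi(L)$ for every idempotent $p \in \soc(\A) \cap \Ran(L)$. The essential-socle hypothesis thus enters only through this single reduction.

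Fix such an idempotent $p$. I would first observe that $Lp = 0$ together with $p^2 = p$ forces $L \subseteq \A(1-p)$, since any $x \in L$ satisfies $x = x(1-p)+xp = x(1-p)$. The natural surjection of left $\A$-modules $\A/L \twoheadrightarrow \A/\A(1-p)$ then yields $\len_\A(\A/\A(1-p)) \leq \xi(L)$. Next, via the left $\A$-module isomorphism $[x] \mapsto xp$ from $\A/\A(1-p)$ onto $\A p$, I would rewrite this bound as $\len_\A(\A p) \leq \xi(L)$.

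It remains to pass from the length of $\A p$ as a left $\A$-module to the length of $p\A$ as a right $\A$-module. Since $p \in \soc(\A)$, both $\A p$ and $p\A$ are one-sided ideals of finite order, and the earlier (unlabelled) lemma asserting that $a\A$ and $\A a$ share the same order for $a \in \soc(\A)$, combined with Lemma \ref{order_len} applied on both sides, gives $\len_\A(p\A) = \len_\A(\A p) \leq \xi(L)$. Feeding this uniform bound back into Lemma \ref{soc_bound_len} completes the proof.

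I do not anticipate a real obstacle here: the essential-socle hypothesis is entirely absorbed by Lemma \ref{soc_bound_len}, the inclusion $L \subseteq \A(1-p)$ is immediate from idempotency, and the left-right length interchange is already available through the socle-order lemma. The one point that demands care is keeping track of which side each length is computed on, but this is bookkeeping rather than substantive.
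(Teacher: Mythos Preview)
Your proposal is correct and follows essentially the same route as the paper: reduce to Lemma~\ref{soc_bound_len} by bounding $\len_\A(p\A)$ for every idempotent $p\in\soc(\A)\cap\Ran(L)$ via the inclusion $L\subseteq\A(1-p)$. The only cosmetic difference is that the paper compresses your chain $\len_\A(p\A)=\len_\A(\A p)=\len_\A(\A/\A(1-p))\leq\xi(L)$ into a single invocation of Proposition~\ref{ch_fred_len}, writing $\len_\A(p\A)=\varrho_l(1-p)=\xi_l(1-p)\leq\xi(L)$; you have simply unpacked that proposition's proof.
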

\begin{proof}
Let $p\in\soc(\A)\cap\Ran(L)$ be an idempotent. We show that $\len_\A(p\A)\leq\xi(L)$:
Indeed, we have $L\subseteq \A(1-p)$. We then get by using Proposition \ref{ch_fred_len}
$$ \len_\A(p\A) = \varrho_l(1-p) = \xi_l(1-p) \leq \xi(L). $$
The claim finally follows by Lemma \ref{soc_bound_len}.
\end{proof}

\begin{corollary}
Suppose $\A$ has essential socle. If $a\in\A$ is weak$^+$-Fredholm, then $\varrho_l(a)<\infty$.
\end{corollary}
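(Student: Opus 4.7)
The statement is essentially a specialization of the preceding theorem to a principal left ideal, so the plan is very short. I would unwind the definitions to reduce the claim about the element $a$ to a claim about the left ideal $L \coloneqq \A a$, and then invoke the theorem directly.

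Concretely, the first step is to observe that by definition $\xi(L) = \len_\A(\A/\A a) = \xi_l(a)$ and $\varrho(L) = \len_\A(\Ran(\A a)) = \varrho_l(a)$. Since $a$ is assumed to be weak$^+$-Fredholm, we have $\xi(L) = \xi_l(a) < \infty$, which is exactly the hypothesis of the previous theorem (applied to this $L$).

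Next, since $\A$ has essential socle by assumption, the previous theorem applies and yields $\varrho(L) \leq \xi(L) < \infty$. Translating back through the definitions, this gives $\varrho_l(a) \leq \xi_l(a) < \infty$, which is the desired conclusion.

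There is really no main obstacle here — the work has been done in the previous theorem and in Lemma \ref{soc_bound_len}. The only thing worth double-checking in writing this out is that the definitions of $\varrho$ and $\xi$ for a general left ideal $L$ truly coincide with $\varrho_l$ and $\xi_l$ in the principal case $L = \A a$, which is immediate from the definition block introducing $\varrho_l$ and $\xi_l$. Hence the corollary is just a restatement of the theorem for principal left ideals.
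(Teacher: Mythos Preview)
Your proposal is correct and matches the paper's approach exactly: the corollary is stated without proof in the paper because it is an immediate specialization of the preceding theorem to the principal left ideal $L=\A a$, which is precisely what you do.
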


From now on, we assume that $\A$ satisfies the following two properties:
\begin{enumerate}[label={(\roman*)}]
\item for all weak$^+$-Fredholm elements $a\in\A$ it holds that $\varrho_l(a)<\infty$, 
\item for all weak$^-$-Fredholm elements $a\in\A$ is holds that $\varrho_r(a)<\infty$.
\end{enumerate}
In this situation, the following definition results in finite numbers:
\begin{definition}
Let $a\in\A$. If $a$ is weak$^+$-Fredholm, we set $\zeta_l(a)\coloneqq\xi_l(a)-\varrho_l(a)$ and analogously
if $a$ is weak$^-$-Fredholm, we set $\zeta_r(a)\coloneqq\xi_r(a)-\varrho_r(a)$.
\end{definition}

\begin{proposition}
Suppose $\A$ is semiprime and let $a\in\A$. If $a$ is weak$^+$-Fredholm, then $\zeta_l(a)\geq0$.
Further, $\zeta_l(a)=0$ if and only if $a$ is semi$^+$-Fredholm. A corresponding result holds for weak$^-$-Fredholm elements.
\end{proposition}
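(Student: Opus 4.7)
The whole statement is essentially a rephrasing of Proposition \ref{ch_fred_len} for the left ideal $L=\A a$, combined with the corollary after Proposition \ref{ch_fred_idp} which identifies semi$^+$-Fredholm elements $a\in\A$ with those for which $\A a$ is a Fredholm left ideal. The semiprime hypothesis is exactly what puts us in the setting of Section 3, where these results apply, and the standing assumption (i) ensures that $\varrho_l(a)<\infty$ whenever $a$ is weak$^+$-Fredholm, so $\zeta_l(a)$ is a well-defined integer.

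For the inequality $\zeta_l(a)\geq 0$, I would argue by contradiction. Suppose $\xi_l(a)<\varrho_l(a)$. Since the right-hand side is finite by (i), this reads $\xi(\A a)\leq \varrho(\A a)<\infty$, which by Proposition \ref{ch_fred_len} forces $\A a$ to be Fredholm and in addition yields $\xi(\A a)=\varrho(\A a)$, contradicting the assumed strict inequality. Hence $\xi_l(a)\geq\varrho_l(a)$, i.e. $\zeta_l(a)\geq 0$.

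For the equivalence, both directions are immediate from the same two results. If $\zeta_l(a)=0$, then $\xi_l(a)=\varrho_l(a)<\infty$, so Proposition \ref{ch_fred_len} applies and shows that $\A a$ is Fredholm; the corollary after Proposition \ref{ch_fred_idp} then tells us that $a$ is semi$^+$-Fredholm. Conversely, if $a$ is semi$^+$-Fredholm, then $\A a$ is Fredholm, and the second half of Proposition \ref{ch_fred_len} gives $\xi_l(a)=\varrho_l(a)$, i.e. $\zeta_l(a)=0$. The argument for the weak$^-$-Fredholm case is the symmetric one, using the analogous statements for right ideals.

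The main obstacle is conceptual rather than technical: one has to recognise that the ``$\leq$'' hypothesis in Proposition \ref{ch_fred_len} is sharp enough to collapse any potential strict inequality $\xi_l(a)<\varrho_l(a)$ to an equality. Once this is seen, no further computation is required.
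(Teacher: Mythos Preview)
Your proposal is correct. The equivalence part is essentially identical to the paper's argument, invoking Proposition~\ref{ch_fred_len} and the corollary after Proposition~\ref{ch_fred_idp} in the same way.

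For the inequality $\zeta_l(a)\geq 0$, your route differs from the paper's. The paper argues directly: since $\varrho_l(a)<\infty$, Lemma~\ref{soc_idmp} produces an idempotent $p\in\soc(\A)$ with $p\A=\Ran(a)$, and then the inclusion $\A a\subseteq \A(1-p)$ gives
\[
\varrho_l(a)=\varrho_l(1-p)=\xi_l(1-p)\leq\xi_l(a).
\]
You instead argue by contradiction, feeding a hypothetical strict inequality $\xi_l(a)<\varrho_l(a)$ straight into Proposition~\ref{ch_fred_len} to force $\xi_l(a)=\varrho_l(a)$. This is slicker in that it treats Proposition~\ref{ch_fred_len} as a black box and avoids redisplaying the idempotent construction, which is already hidden inside the proof of that proposition. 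The paper's version is a little more transparent about \emph{why} the inequality holds, but both arguments are equally valid and of comparable length.
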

\begin{proof}
Suppose $a$ is weak$^+$-Fredholm. As $\varrho_l(a) < \infty$, there exists an idempotent $p\in\A$
such that $p\A=\Ran(a)$ by Lemma \ref{soc_idmp}. Since $\A a \subseteq \A (1-p)$ we obtain
$$ \varrho_l(a) = \varrho_l(1-p) = \xi_l(1-p) \leq \xi_l(a). $$
The remaining part follows by Proposition \ref{ch_fred_len} and Proposition \ref{ch_fred_idp}.
\end{proof}

In order to simplify proofs, we need a notion of Fredholm maps between modules:

\begin{definition}
Let $M_1,M_2$ be left $\A$-modules and $\varphi\colon M_1\to M_2$ be an $\A$-linear map. Then we call $\varphi$ Fredholm if $\len_\A(\ker\varphi)<\infty$
and $\len_\A(\coker\varphi)<\infty$. In this case, we set 
$$\ind(\varphi)\coloneqq \len_\A(\ker\varphi)-\len_\A(\coker\varphi). $$
\end{definition}

The following lemma is just a generalized version of the usual additivity of the index in the vector space case:

\begin{lemma} \label{atkinson_fred}
Let $M_1,M_2,M_3$ be left $\A$-modules and $\varphi\colon M_1\to M_2$ as well as $\psi\colon M_2\to M_3$ be $\A$-linear maps
that are Fredholm. Then 
$$\ind(\psi\circ\varphi)=\ind(\psi)+\ind(\varphi).$$
\end{lemma}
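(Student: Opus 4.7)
The plan is to exhibit a canonical six-term exact sequence (the ``ker-coker'' sequence) linking the kernels and cokernels of $\varphi$, $\psi$, and $\psi\circ\varphi$, and then to apply the additivity of $\len_\A$ along this sequence. This is the module-theoretic analogue of the standard Fredholm-theoretic argument in the vector-space setting.

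Concretely, I would write down the sequence
$$ 0 \to \ker\varphi \to \ker(\psi\circ\varphi) \xrightarrow{\varphi} \ker\psi \to \coker\varphi \xrightarrow{\overline{\psi}} \coker(\psi\circ\varphi) \to \coker\psi \to 0, $$
where the first arrow is the inclusion (using $\ker\varphi\subseteq\ker(\psi\circ\varphi)$), the arrow labelled $\varphi$ is the restriction of $\varphi$ to $\ker(\psi\circ\varphi)$ (which does land in $\ker\psi$), the connecting map $\ker\psi\to\coker\varphi$ is $y\mapsto [y]$, the map $\overline{\psi}$ is induced by $\psi$ on cokernels (well-defined because $\psi(\im\varphi)\subseteq\im(\psi\circ\varphi)$), and the final arrow is the natural surjection coming from $\im(\psi\circ\varphi)\subseteq\im\psi$. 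Exactness would then be checked by a routine diagram chase at each of the four interior positions.

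Once exactness is established, two things remain. First, Fredholmness of $\psi\circ\varphi$ follows from the sequence itself: the initial fragment $0\to\ker\varphi\to\ker(\psi\circ\varphi)\to\ker\psi$ and the terminal fragment $\coker\varphi\to\coker(\psi\circ\varphi)\to\coker\psi\to 0$ give the bounds
$$ \len_\A(\ker(\psi\circ\varphi))\leq\len_\A(\ker\varphi)+\len_\A(\ker\psi),\qquad \len_\A(\coker(\psi\circ\varphi))\leq\len_\A(\coker\varphi)+\len_\A(\coker\psi), $$
both of which are finite. Second, the additivity of length along the full six-term exact sequence yields
$$ \len_\A(\ker\varphi)-\len_\A(\ker(\psi\circ\varphi))+\len_\A(\ker\psi)-\len_\A(\coker\varphi)+\len_\A(\coker(\psi\circ\varphi))-\len_\A(\coker\psi)=0, $$
and rearranging this is precisely $\ind(\psi\circ\varphi)=\ind(\psi)+\ind(\varphi)$.

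The main obstacle I expect is the verification of exactness at the two middle positions $\ker\psi$ and $\coker\varphi$, where the connecting maps blend inclusion into $M_2$ with projection onto $M_2/\im\varphi$; there one needs small but non-obvious manipulations with representatives (e.g.\ writing $\psi(y)=\psi(\varphi(x))$ as $\psi(y-\varphi(x))=0$ to promote a class in $\coker\varphi$ back into an element of $\ker\psi$). All remaining exactness checks, as well as the final length bookkeeping, are formal.
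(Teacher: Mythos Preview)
Your argument is correct: the six-term ker--coker sequence you write down is exact (the verifications you flag at $\ker\psi$ and $\coker\varphi$ are precisely the ones requiring the representative manipulation $y-\varphi(x)\in\ker\psi$), and once all six terms have finite length the alternating-sum identity gives the index formula as you state. The paper itself does not supply a proof of this lemma but simply cites an external reference, so there is no in-paper argument to compare against; the proof you outline is the standard one and is exactly what one would expect to find behind that citation.
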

\begin{proof}
(see \cite{xiong_2020}, Theorem 4)
\end{proof}

\begin{theorem}
Suppose $a,b\in\A$ are weak-Fredholm elements. Then
$$ \zeta_l(ab)+\zeta_r(ab) = \zeta_l(a)+\zeta_r(a) + \zeta_l(b)+\zeta_r(b). $$
\end{theorem}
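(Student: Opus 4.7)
The plan is to encode the four quantities $\xi_l, \xi_r, \varrho_l, \varrho_r$ as kernel- and cokernel-lengths of two naturally associated $\A$-linear multiplication maps, and then read off the desired identity by applying Lemma \ref{atkinson_fred} twice.

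Concretely, I would introduce the right-multiplication map $R_a\colon\A\to\A$, $x\mapsto xa$, viewed as a homomorphism of left $\A$-modules, together with the left-multiplication map $L_a\colon\A\to\A$, $x\mapsto ax$, viewed as a homomorphism of right $\A$-modules. By inspection, $\ker R_a=\Lan(a)$ and $\coker R_a=\A/\A a$, while $\ker L_a=\Ran(a)$ and $\coker L_a=\A/a\A$. Thus, under the standing assumptions (i)--(ii) and the fact that $a$ is weak-Fredholm, both $R_a$ and $L_a$ are Fredholm in the sense of the previous definition, and
$$\ind(R_a)+\ind(L_a)=\bigl(\varrho_r(a)-\xi_l(a)\bigr)+\bigl(\varrho_l(a)-\xi_r(a)\bigr)=-\zeta_l(a)-\zeta_r(a).$$
The same calculation of course applies to $b$ and to $ab$; here I would invoke the earlier proposition (and its right-sided analogue) to know that $ab$ is again weak-Fredholm, so that $\zeta_l(ab)$ and $\zeta_r(ab)$ are defined.

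Next, I would exploit the multiplicativity relations $R_{ab}=R_b\circ R_a$ and $L_{ab}=L_a\circ L_b$, which follow directly from the associativity of $\A$. Lemma \ref{atkinson_fred}, applied separately to these two composites (using the lemma for left modules on $R$ and the symmetric statement for right modules on $L$), yields $\ind(R_{ab})=\ind(R_a)+\ind(R_b)$ and $\ind(L_{ab})=\ind(L_a)+\ind(L_b)$. Adding and applying the identity from the previous paragraph gives
$$-\zeta_l(ab)-\zeta_r(ab)=-\zeta_l(a)-\zeta_r(a)-\zeta_l(b)-\zeta_r(b),$$
which is the claim.

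The only real care point is to confirm that Lemma \ref{atkinson_fred}, stated in the excerpt for left $\A$-modules, applies just as well to right $\A$-modules (used for $L_a, L_b, L_{ab}$); this is automatic by passing to the opposite ring, but it should be remarked explicitly. Beyond that, the argument is essentially bookkeeping: the content of the theorem is entirely absorbed by the additivity of the Fredholm index, once the correct two multiplication maps are identified.
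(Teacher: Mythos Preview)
Your proposal is correct and is essentially the same argument as the paper's own proof: the paper also introduces the right-multiplication maps $x\mapsto xa$ and $x\mapsto xb$ on $\A$ as left $\A$-module maps, applies Lemma~\ref{atkinson_fred} to their composite to obtain $\varrho_r(ab)-\xi_l(ab)=\varrho_r(a)-\xi_l(a)+\varrho_r(b)-\xi_l(b)$, then states the symmetric identity ``similarly'' and adds the two. Your write-up is in fact slightly more careful than the paper's, since you explicitly flag that $ab$ must first be shown to be weak-Fredholm and that the right-module version of Lemma~\ref{atkinson_fred} (obtained by passing to the opposite ring) is being invoked for the $L$-maps.
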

\begin{proof}
Consider the maps $\varphi\colon\A\to\A\,,x\mapsto xa$ and $\psi\colon\A\to\A\,,x\mapsto xb$.
Then $\varphi$ and $\psi$ are Fredholm maps and Lemma \ref{atkinson_fred} yields that:
\begin{align*}
\varrho_r(ab)-\xi_l(ab) = \ind(\psi\circ\varphi) = \ind(\psi)+\ind(\varphi) = \varrho_r(a)-\xi_l(a) + \varrho_r(b)-\xi_l(b)
\end{align*}
Similarly, we obtain $\varrho_l(ab)-\xi_r(ab) = \varrho_l(a)-\xi_r(a) + \varrho_l(b)-\xi_r(b)$.
By adding up both equations, the claim follows.
\end{proof}

\begin{corollary}
Assume $\A$ is semiprime and let $a,b\in\A$ be weak-Fredholm elements such that $\zeta_l(ab),\zeta_r(ab)\leq1$. Then $a$ is Fredholm or $b$ 
is Fredholm or both $a$ and $b$ are semi-Fredholm.
\end{corollary}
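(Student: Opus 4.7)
The plan is to reduce the statement to a small piece of arithmetic on nonnegative integers via the additivity theorem immediately preceding it.

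First I would invoke the previous theorem to obtain
$$ \zeta_l(a)+\zeta_r(a)+\zeta_l(b)+\zeta_r(b) \;=\; \zeta_l(ab)+\zeta_r(ab) \;\leq\; 2. $$
By the semiprimeness hypothesis together with the proposition characterising $\zeta_l$ and $\zeta_r$, each of the four summands on the left is a nonnegative integer. That same proposition also tells us that $\zeta_l(a)=0$ iff $a$ is semi$^+$-Fredholm and $\zeta_r(a)=0$ iff $a$ is semi$^-$-Fredholm (and analogously for $b$). In particular, $a$ is Fredholm iff $\zeta_l(a)=\zeta_r(a)=0$, and $a$ is semi-Fredholm iff at least one of $\zeta_l(a),\zeta_r(a)$ vanishes.

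Next I would argue by contradiction. Suppose the conclusion fails: neither $a$ nor $b$ is Fredholm, and it is not the case that both are semi-Fredholm. The last condition means that at least one of $a,b$ is not semi-Fredholm; without loss of generality, say $a$ is not semi-Fredholm. Then $\zeta_l(a)\geq 1$ and $\zeta_r(a)\geq 1$, so $\zeta_l(a)+\zeta_r(a)\geq 2$. Combined with the displayed inequality above, this forces $\zeta_l(b)+\zeta_r(b)\leq 0$, hence $\zeta_l(b)=\zeta_r(b)=0$, and thus $b$ is Fredholm -- contradicting our assumption.

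There is no genuine obstacle in this argument; the substance has already been carried by the preceding theorem and proposition. The only point to be careful about is verifying that all four quantities $\zeta_l(a),\zeta_r(a),\zeta_l(b),\zeta_r(b)$ really are defined and nonnegative, which requires both that $a,b$ are weak-Fredholm (so that the standing assumptions of the section give finiteness of $\varrho_l,\varrho_r$) and that $\A$ is semiprime (needed in the proposition ensuring $\zeta\geq 0$).
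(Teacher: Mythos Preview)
Your argument is correct and matches the paper's intended route: the corollary is stated without proof, as it is immediate from the additivity theorem together with the proposition guaranteeing $\zeta_l,\zeta_r\geq 0$ in the semiprime case, which is exactly what you use. The only thing you might add for completeness is a remark that $ab$ is itself weak-Fredholm (so that $\zeta_l(ab),\zeta_r(ab)$ are defined), which follows from the earlier proposition on products of weak$^+$-Fredholm elements and its right-sided analogue.
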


Note that by using the next result, one can boil up an alternative approach to the first equivalence of Corollary \ref{equiv_weak_semi}.

\begin{corollary}
Let $a\in\A$ be a weak-Fredholm element and $n\in\mathbb{N}$ such that there exists a n-th root of $a$ (i.e. $s\in\A$ satisfying $s^n=a$). Then 
$$n \mid (\zeta_l(a)+\zeta_r(a)).$$
In particular, if $\A$ is semiprime and $n$ can be chosen arbitrarily large, then $a$ is Fredholm.
\end{corollary}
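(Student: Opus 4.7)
The plan is to reduce the claim to the additivity theorem for $\zeta_l+\zeta_r$ proved immediately above. First I would check that the hypothesis ``weak-Fredholm'' transfers from $a$ to its $n$-th root $s$. Writing $s^n=s^{n-1}\cdot s$ gives $\A a=\A s^n\subseteq \A s$, so $\A/\A s$ is an $\A$-linear quotient of $\A/\A a$ and thus $\xi_l(s)\leq\xi_l(a)<\infty$. Dually, writing $s^n=s\cdot s^{n-1}$ gives $s^n\A\subseteq s\A$, whence $\xi_r(s)\leq\xi_r(a)<\infty$. By the standing assumptions of this section, $\varrho_l(s)$ and $\varrho_r(s)$ are then also finite, so $\zeta_l(s)$ and $\zeta_r(s)$ are well-defined integers, and by the earlier product proposition each power $s^k$ is again weak-Fredholm.

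With that in hand, I would iterate the previous theorem: the identity
$$\zeta_l(s^k)+\zeta_r(s^k) = \zeta_l(s)+\zeta_r(s)+\zeta_l(s^{k-1})+\zeta_r(s^{k-1})$$
unfolds by induction on $k$ to give $\zeta_l(s^n)+\zeta_r(s^n)=n\bigl(\zeta_l(s)+\zeta_r(s)\bigr)$. Since $s^n=a$, this immediately yields $n\mid \zeta_l(a)+\zeta_r(a)$.

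For the ``in particular'' clause, assume $\A$ is semiprime and that for arbitrarily large $n$ there exists an $n$-th root of $a$. The earlier proposition guarantees $\zeta_l(a),\zeta_r(a)\geq0$, so $\zeta_l(a)+\zeta_r(a)$ is a fixed non-negative integer divisible by arbitrarily large $n$; it must therefore be zero. Because both summands are non-negative, we conclude $\zeta_l(a)=\zeta_r(a)=0$, and the same proposition then says $a$ is both semi$^+$- and semi$^-$-Fredholm, i.e.\ Fredholm.

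The only real content is the comparison $\xi_l(s)\leq\xi_l(a)$ (and its right-sided analogue); once that quotient observation is in place, everything else is formal bookkeeping against the additivity theorem, and I do not anticipate any serious obstacle.
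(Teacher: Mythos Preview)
Your proposal is correct and follows exactly the intended route: the paper states this as an unproved corollary of the additivity theorem $\zeta_l(ab)+\zeta_r(ab)=\zeta_l(a)+\zeta_r(a)+\zeta_l(b)+\zeta_r(b)$, and your argument supplies precisely the missing details---most importantly the observation $\A a\subseteq\A s$ (and its right analogue) that forces $s$ to be weak-Fredholm, after which the induction and the nonnegativity proposition finish the job as you describe.
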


\section{Fredholm theory in $C^*$-algebras}

In the entire section $\A$ denotes a unital $C^*$-algebra.

\begin{remark}
We note that the semi-maximal left ideals of $\A$ are precisely the closed left ideals (see \cite{murphy2014c}, 5.3.3. Theorem).
\end{remark}

Next, we repeat a well-known result of the theory of $C^*$-algebras:

\begin{lemma} \label{finite_proj}
Let $L\subseteq\A$ be a closed left ideal that is finitely generated. Then, $L$ is generated by a projection.
\end{lemma}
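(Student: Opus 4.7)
The plan is to manufacture a single positive element $h \in L$ that generates $L$ as a closed left ideal, and then to exploit finite generation to force the spectrum $\sigma(h)$ to be separated from the origin; once this is achieved, a projection generating $L$ will fall out of continuous functional calculus.

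Concretely, I would fix generators $a_1, \ldots, a_n$ of $L$ and set $h := \sum_{i=1}^n a_i^* a_i \in L$. The first (and straightforward) step is to verify $L = \overline{\A h}$: the inclusion $\A h \subseteq L$ is immediate, and for the reverse direction I would use the standard C$^*$-trick that $a_i^* a_i \leq h$ implies $a_i = \lim_{\varepsilon \to 0^+} a_i\, g_\varepsilon(h)$ in norm, where $g_\varepsilon(t) := t^{1/2}(t+\varepsilon)^{-1/2}$, combined with $g_\varepsilon(h) \in \overline{\A h}$ (note $g_\varepsilon(0) = 0$, so $g_\varepsilon$ is a uniform limit on $\sigma(h)$ of polynomials without constant term). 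Once $L = \overline{\A h}$ is in hand, I would aim to show that $0$ is either absent from $\sigma(h)$ or an isolated point of it; if so, the function $\chi_{(0,\|h\|]}$ coincides on $\sigma(h)$ with a continuous function and produces a projection $p := \chi_{(0,\|h\|]}(h) \in \A$. One then easily verifies $h = ph$ and $p = g(h)\, h$ (with $g(t) = 1/t$ on the upper component of $\sigma(h)$ and $g(0) = 0$), so $\A h = \A p$ is closed, and hence $L = \A p$.

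The main obstacle is ruling out that $0$ is a non-isolated point of $\sigma(h)$, and this is the one place where finite generation must enter essentially. My plan here is an argument by contradiction: assuming such non-isolation, pick $t_k \in \sigma(h) \cap (0, 1/k)$ and continuous bump functions $f_k \colon [0,\|h\|] \to [0,1]$ with $f_k(t_k) = 1$ and support inside $[t_k/2,\, 3t_k/2]$, so that $\|f_k(h)\| = 1$ while $\|h f_k(h)^2\| \leq 3t_k/2 \to 0$. Since $h = \sum_i a_i^* a_i$ commutes with $f_k(h)$, expansion gives $h f_k(h)^2 = \sum_i (a_i f_k(h))^*(a_i f_k(h))$, and positivity of the summands forces $\|a_i f_k(h)\|^2 \leq \|h f_k(h)^2\| \to 0$ for each $i$. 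I would then invoke the open mapping theorem applied to the continuous surjection $\A^n \to L,\,(x_i) \mapsto \sum x_i a_i$ (valid because $L$ is closed, hence a Banach space): this produces a uniform constant $C > 0$ and, for each $k$, representations $f_k(h) = \sum_i x_i^{(k)} a_i$ with $\|x_i^{(k)}\| \leq C$. Multiplying on the right by $f_k(h)$ would then yield $\|f_k(h)^2\| \leq nC\,\max_i \|a_i f_k(h)\| \to 0$, contradicting $\|f_k(h)^2\| \geq f_k(t_k)^2 = 1$. This step, which converts the topological input (closedness together with finite generation) into an algebraic statement about $\sigma(h)$, is the real content of the lemma; everything else is routine functional calculus.
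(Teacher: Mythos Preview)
Your argument is correct. The paper does not actually prove this lemma: it simply cites \cite{blecher_2014}, Lemma~2.1, and moves on. Your write-up is therefore strictly more informative than what the paper provides, and in fact it is essentially the standard proof one finds behind such citations: reduce to a single positive generator $h=\sum_i a_i^*a_i$, use the open mapping theorem on the surjection $\A^n\to L$ (this is exactly where closedness and finite generation are both needed) to force a spectral gap for $h$ at $0$, and then read off the projection by continuous functional calculus. Every step you outline goes through as written; in particular, the estimate $\|a_i f_k(h)\|^2\le\|h f_k(h)^2\|$ follows from positivity of the individual summands $(a_i f_k(h))^*(a_i f_k(h))$, and the contradiction $\|f_k(h)^2\|\ge f_k(t_k)^2=1$ is immediate from the spectral mapping theorem. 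The only cosmetic point is that the open mapping theorem literally gives $\|x_i^{(k)}\|\le C\|f_k(h)\|$, but since $\|f_k(h)\|=1$ for all $k$ this is exactly the uniform bound you use.
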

\begin{proof}
(see \cite{blecher_2014}, Lemma 2.1)
\end{proof}

\begin{theorem} \label{fin_gen_proj} 
Let $L\subseteq\A$ be a finitely generated left ideal such that the $\A$-module $\A/L$ is Noetherian. Then, $L$ is Fredholm.
\end{theorem}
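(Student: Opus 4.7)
The plan is to show that $L = \A p$ for some projection $p \in \A$ with $1-p \in \soc(\A)$; then $L$ is Fredholm by Proposition~\ref{ch_fred_idp}.

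First I would show $L$ is closed. Since $\overline{L}/L$ is a submodule of the Noetherian module $\A/L$, it is finitely generated, and so $\overline{L}$ is finitely generated as well; by Lemma~\ref{finite_proj} we have $\overline{L} = \A p$ for some projection $p \in \A$. To obtain $L = \A p$ I would argue by contradiction: if $L \subsetneq \A p$, then the finitely generated non-zero module $\A p/L$ has a maximal proper submodule, yielding a left ideal $J$ with $L \subseteq J \subsetneq \A p$ and $\A p/J$ simple. Via the isomorphism $\A p \cong \A/\A(1-p)$, $J$ corresponds to a left ideal $\tilde J \supseteq \A(1-p)$ with $\A/\tilde J$ simple, so $\tilde J$ is a maximal left ideal of $\A$. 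Since maximal left ideals in a C$^*$-algebra are closed, $J = \tilde J \cap \A p$ is closed; but then $J \supseteq L$ forces $J \supseteq \overline{L} = \A p$, contradicting $J \subsetneq \A p$. Hence $L = \A p$.

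Next I would show $1-p \in \soc(\A)$ by extracting a finite composition series of $\A/L$. Starting from $m_0 = \A$, I recursively pick a maximal proper submodule $m_i/L$ of $m_{i-1}/L$ (possible as long as the latter is non-zero, since every finitely generated non-zero module over a unital ring has a maximal submodule). The same closedness argument as above shows each $m_i$ is closed, and $m_i$ is finitely generated (because $m_i/L$ is a submodule of the Noetherian module $\A/L$), so $m_i = \A p_i$ for projections $p_i \leq p_{i-1}$ with $p_{i-1} - p_i$ a minimal projection (since $m_{i-1}/m_i \cong \A(p_{i-1}-p_i)$ is simple). For termination: the induced chain $\A(1-p_1) \subsetneq \A(1-p_2) \subsetneq \cdots$ inside $\A(1-p) \cong \A/L$ is strictly ascending, so Noetherianity forces it to stabilize, giving $m_k = L$ for some $k$. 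Then $1-p = (1-p_1) + \sum_{i=1}^{k-1}(p_i - p_{i+1})$ is a finite sum of minimal projections, so $1-p \in \soc(\A)$.

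The main obstacle is the closedness step: a finitely generated left ideal of a general C$^*$-algebra need not equal its closure, so the Noetherian hypothesis on $\A/L$ must be used in an essential way. The argument I propose leverages the fact that maximal left ideals of a C$^*$-algebra are automatically closed, which is exactly what rules out the pathological dense-but-proper scenario. Once $L$ is known to be closed, the rest reduces to standard composition-series machinery already in the spirit of Proposition~\ref{ch_fred_len} and Theorem~\ref{ch_fred_th}.
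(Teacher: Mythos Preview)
Your proof is correct. Both your argument and the paper's start by applying Lemma~\ref{finite_proj} to obtain $\overline{L}=\A p$ and then show $L=\overline{L}$; the difference lies in how this equality is established. The paper uses a one-line Neumann-series trick, applied uniformly to \emph{every} intermediate left ideal $L\subseteq I\subseteq\A$: pick $x\in I$ with $\|x-p\|<1$ (where $\overline{I}=\A p$), so $1-(p-x)$ is invertible and hence $p\in\A x\subseteq I$. This shows in one stroke that each such $I$ is idempotent-generated, whence $\A/L$ is semisimple, and Theorem~\ref{ch_fred_th} finishes. You replace the Neumann step by a contradiction via a maximal submodule and the automatic closedness of maximal left ideals in a unital Banach algebra, and then in a separate second phase build a composition series by hand to exhibit $1-p$ as a finite sum of minimal projections. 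The paper's route is shorter and recycles a single device; yours is more explicit, avoids invoking Theorem~\ref{ch_fred_th}, and makes the membership $1-p\in\soc(\A)$ directly visible.
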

\begin{proof}
We prove that every left ideal $L\subseteq I\subseteq\A$ is generated by an idempotent.
Clearly, we have that $\overline{I}$ is finitely generated and thus Lemma \ref{finite_proj} implies
that there exists an idempotent $p\in\A$ satisfying $\overline{I}=\A p$.
Now, we can see that $I=\A p$: Indeed, there exists $x\in I$ with $\|x-p\|<1$.
By the Neumann series, we obtain $yx=p$ for some $y\in\A$ and hence $I=\A p$.
In particular, we see that $L$ is generated by an idempotent and that $\A/L$ is semisimple.
The statement finally follows by Theorem \ref{ch_fred_th}.

\end{proof}

Note that the following corollary is not so relevant for the remaining paper
but still shows how the preceding theorem can be applied to the theory of $\A$-modules.
\begin{corollary} 
Let $M$ be a Noetherian left $\A$-module of finite presentation. If $M$ is generated by $n$ elements,
there exist left ideals of finite order $L_1,\dots, L_n\subseteq\A$ such that
$$ M \cong L_1 \oplus \dots \oplus L_n.$$
\end{corollary}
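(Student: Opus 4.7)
The plan is to proceed by induction on $n$, using Theorem~\ref{fin_gen_proj} for the base case and peeling off one generator at a time in the inductive step. Two auxiliary observations to secure up front: (a) a direct summand of a Noetherian, finitely presented module inherits both properties; and (b) every left ideal of finite order is projective, since it is a finite direct sum of minimal left ideals of the form $\A e$ and each such summand is a direct summand of $\A = \A e \oplus \A(1-e)$.

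For the base case $n=1$, I would write $M=\A m$ and observe that $M\cong\A/L$ where $L$ is the left annihilator of $m$. Finite presentation of $M$ forces $L$ to be finitely generated, and $\A/L\cong M$ is Noetherian, so Theorem~\ref{fin_gen_proj} together with Proposition~\ref{ch_fred_idp} yields $L=\A(1-p)$ for some idempotent $p\in\soc(\A)$. Consequently $M\cong \A p$, and since $p$ already lies in $\soc(\A)$, the ideal $\A p$ is contained in a finite sum of minimal left ideals; Lemma~\ref{order_len} then identifies it as a left ideal of finite order.

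For the inductive step, assume the result for $n-1$ generators and let $m_1,\dots,m_n$ generate $M$. Setting $N\coloneqq\A m_n$, the quotient $M/N$ is Noetherian (as a quotient of $M$), is finitely presented (quotient of a finitely presented module by the finitely generated submodule $N$), and is generated by the $n-1$ cosets of $m_1,\dots,m_{n-1}$. The induction hypothesis then gives $M/N\cong L_1\oplus\dots\oplus L_{n-1}$ with each $L_j$ of finite order, hence projective by observation (b). Thus the short exact sequence $0\to N\to M\to M/N\to 0$ splits, yielding $M\cong N\oplus M/N$. Being a direct summand of $M$, the cyclic module $N$ is itself Noetherian and finitely presented by (a), so the base case applies and produces $L_n$ of finite order with $N\cong L_n$. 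Combining, $M\cong L_1\oplus\dots\oplus L_n$.

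The main obstacle will be the splitting of the short exact sequence at each inductive step: without projectivity of $M/N$ one cannot glue the contribution of the last generator onto the summands produced by the induction hypothesis. That is why observation (b) has to be in place from the outset; the remaining difficulties are the module-theoretic bookkeeping ensuring that Noetherianness and finite presentation propagate correctly through quotients by finitely generated submodules and through direct summands.
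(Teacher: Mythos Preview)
Your argument is correct and rests on the same three ingredients as the paper's proof: induction on the number of generators, Theorem~\ref{fin_gen_proj} to produce a Fredholm left ideal, and projectivity of finite-order ideals to split the relevant short exact sequence. The organizational difference is only in which piece you peel off first. The paper works with the explicit presentation $0\to E\to\A^n\to M\to 0$, projects $E$ onto the last coordinate to obtain the Fredholm ideal $\pi(E)=\A(1-p)$, and splits off the \emph{quotient} $L_n\cong\A/\pi(E)\cong M/(\A m_1+\dots+\A m_{n-1})$; it then needs the auxiliary splitting of $0\to E\cap F\to E\to\pi(E)\to 0$ to see that $E\cap F$ is finitely generated and hence that the remaining piece $M'\cong\A m_1+\dots+\A m_{n-1}$ is finitely presented. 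You instead take the cyclic \emph{submodule} $N=\A m_n$, apply the induction hypothesis directly to $M/N$, and then invoke the base case on $N$. Your route makes the finite-presentation bookkeeping a little lighter (the quotient $M/N$ is immediately finitely presented, and $N$ inherits finite presentation as a direct summand), at the cost of having to establish projectivity of the full sum $L_1\oplus\dots\oplus L_{n-1}$ rather than of a single $L_n$; both are routine. In substance the two arguments coincide.
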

\begin{proof}
We show that statement by induction over $n$. If $n=0$, the statement is trivial.
Now, assume $n>0$. Then, there exists a surjective $\A$-linear map $\varphi\colon\A^n\to M$ such that $E\coloneqq\ker\varphi$ is finitely generated.
Let $\pi\colon\A^n\to\A,\,(x_1,\dots,x_n)\to x_n$ and note that by the homomorphism theorem, there exists a surjective map $\A^n/E\to\A/\pi(E)$.
By Theorem \ref{fin_gen_proj}, we have that $\pi(E)$ is Fredholm.
Now, there exists an idempotent $p\in\soc(\A)$ such that $\pi(E)=\A(1-p)$ by Proposition \ref{ch_fred_idp}.
We set $L_n\coloneqq \A p \cong \A/\pi(E)$ and $F\coloneqq \A^{n-1}\times\{0\}$. We consider the short exact sequence
$$ \begin{tikzcd} 0 \arrow[r] & E\cap F \arrow[r] & E \arrow[r,"\pi|_E"] & \pi(E) \arrow[r]  & 0 \end{tikzcd} $$
and note that it splits as $\pi(E)$ is projective. Hence, $E\cap F$ is finitely generated and the left $\A$-module $M'\coloneqq F/(E\cap F)$
is of finite presentation. Further, the short exact sequence
$$ \begin{tikzcd} 0 \arrow[r] & M' \arrow[r] & M \arrow[r] & L_n \arrow[r]  & 0 \end{tikzcd} $$
splits as $L_n$ is projective. Therefore, $M\cong M'\oplus L_n$. As $M'$ is a Noetherian $\A$-module of finite presentation
that is generated by $n-1$ elements, the induction hypothesis yields the claim.
\end{proof}

We can also use the preceding theorem to see that in the case of $C^*$-algebras 
every weak-Fredholm element is Fredholm.
\begin{corollary} \label{equiv_weak_semi}
Let $a\in\A$. Then, the following assertions are equivalent:
\begin{enumerate}[label={(\roman*)}]
\item $a$ is weak$^+$-Fredholm,
\item $a$ is semi$^+$-Fredholm,
\item the $\A$-module $\A / \A a$ is semisimple,
\item $\varrho_l(a)<\infty$ and $\A a$ is closed.
\end{enumerate}
\end{corollary}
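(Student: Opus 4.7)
The plan is to establish the three-way equivalence $(i) \iff (ii) \iff (iii)$ as a closed cycle driven by the preceding Theorem \ref{fin_gen_proj}, and then to tack on $(iv)$ separately via $(ii) \iff (iv)$, exploiting the fact that in a unital $C^*$-algebra the closedness of principal left ideals interacts cleanly with idempotents.

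For $(i) \Rightarrow (ii)$, I would observe that $\xi_l(a) < \infty$ makes $\A/\A a$ a module of finite length, hence Noetherian, while $\A a$ is generated by the single element $a$; Theorem \ref{fin_gen_proj} then gives immediately that $\A a$ is Fredholm, i.e.\ $a$ is semi$^+$-Fredholm. The implication $(ii) \Rightarrow (iii)$ is the Corollary just after Proposition \ref{ch_semisimple}, and $(iii) \Rightarrow (i)$ follows from the same Proposition \ref{ch_semisimple}, using that $\A/\A a$ is generated by $[1]$ so that semisimplicity already forces $\xi_l(a) < \infty$.

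The direction $(ii) \Rightarrow (iv)$ is straightforward: Proposition \ref{ch_fred_idp} writes $\A a = \A(1-p) = \{x \in \A : xp = 0\}$ for some $p \in \soc(\A)$, which is closed as the kernel of right multiplication by $p$, and Proposition \ref{ch_fred_len} delivers $\varrho_l(a) < \infty$. The interesting direction is $(iv) \Rightarrow (ii)$. Here I would apply Lemma \ref{finite_proj} to the closed, finitely generated ideal $\A a$ to obtain a projection $q$ with $\A a = \A q$, note that $\Ran(\A a) = \Ran(\A q) = (1-q)\A$, and then combine the hypothesis $\varrho_l(a) < \infty$ with Lemmas \ref{order_len} and \ref{soc_idmp} to produce some $r \in \soc(\A)$ with $r\A = (1-q)\A$. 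Since $\soc(\A)$ is an ideal, this forces $1-q \in r\A \subseteq \soc(\A)$, so Proposition \ref{ch_fred_idp} applied to the idempotent $1-q$ certifies that $\A a = \A(1-(1-q))$ is Fredholm.

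The main obstacle is really that last step: upgrading the rather meager data in $(iv)$, namely closedness of $\A a$ together with finiteness only of the right annihilator, into the much stronger statement that the complementary idempotent $1-q$ genuinely lies in the socle. Lemma \ref{finite_proj} is the decisive $C^*$-specific input that converts ``closed plus finitely generated'' into a bona fide projection; once that is in hand, the ideal-theoretic machinery from Section 3 takes care of the remaining bookkeeping without further analytic input.
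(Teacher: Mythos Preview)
Your argument is correct and is precisely the intended route: the paper states this as a corollary without proof, and your use of Theorem~\ref{fin_gen_proj} for $(i)\Rightarrow(ii)$, together with the cycle through Proposition~\ref{ch_semisimple} and its corollary for $(ii)\Rightarrow(iii)\Rightarrow(i)$, is exactly what is meant. Your handling of $(iv)\Rightarrow(ii)$ via Lemma~\ref{finite_proj} to obtain a projection $q$ and then forcing $1-q\in\soc(\A)$ from $\varrho_l(a)<\infty$ is also clean and correct; this part genuinely requires a short argument beyond Theorem~\ref{fin_gen_proj} since the Noetherian hypothesis is not available from $(iv)$ alone.
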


\begin{definition}
Let $L\subseteq\A$ be a left ideal. Then we set $\delta(L)\coloneqq\codim(L+L^*)$.
\end{definition}

\begin{proposition}
Let $L_1,L_2\subseteq\A$ be closed left ideals such that $L_1\subseteq L_2$. Then $\delta(L_2)\leq\delta(L_1)$.
Moreover, if $\delta(L_1)=\delta(L_2)<\infty$ we have $L_1=L_2$.
\end{proposition}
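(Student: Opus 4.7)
The plan is first to observe that the involution preserves inclusions of subspaces, so $L_1\subseteq L_2$ gives $L_1^{*}\subseteq L_2^{*}$ and hence $L_1+L_1^{*}\subseteq L_2+L_2^{*}$ as $\mathbb{C}$-subspaces of $\A$. Algebraic codimension is order-reversing along this inclusion, so $\delta(L_2)\leq\delta(L_1)$ is immediate.

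For the rigidity part, assume $\delta(L_1)=\delta(L_2)<\infty$. Then $L_1+L_1^{*}\subseteq L_2+L_2^{*}$ are two subspaces of $\A$ of the same finite codimension, so they coincide. For any $x\in L_2$, the element $x^{*}x$ is positive and lies in $L_2\cap L_2^{*}\subseteq L_2+L_2^{*}=L_1+L_1^{*}$. If I can show $x^{*}x\in L_1$, then the standard $C^{*}$-algebra fact ``$y\in L\iff y^{*}y\in L$'' for closed left ideals (whose nontrivial direction uses continuous functional calculus to approximate $y=\lim_n y\cdot y^{*}y\,(y^{*}y+1/n)^{-1}\in L$) applied with $y=x$ yields $x\in L_1$, and hence $L_2\subseteq L_1$.

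The main obstacle is thus lifting $x^{*}x\in L_1+L_1^{*}$ to $x^{*}x\in L_1$, for which I would use the state-theoretic description of closed left ideals in a unital $C^{*}$-algebra: by \cite{murphy2014c}, 5.3.3, $L_1=\bigcap_\alpha N_{f_\alpha}$ for pure states $f_\alpha$, where $N_f=\{a\in\A:f(a^{*}a)=0\}$. The Cauchy--Schwarz inequality for states forces $f_\alpha(a)=0$ for each $a\in L_1$, so every $y=a+b^{*}\in L_1+L_1^{*}$ satisfies $f_\alpha(y)=f_\alpha(a)+\overline{f_\alpha(b)}=0$. In particular $f_\alpha(x^{*}x)=0$ for every $\alpha$. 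Since $x^{*}x\geq 0$, the one-line GNS argument $\pi_{f_\alpha}(x^{*}x)^{1/2}\xi_{f_\alpha}=0$ upgrades this to $f_\alpha((x^{*}x)^{2})=0$, i.e.\ $x^{*}x\in N_{f_\alpha}$ for every $\alpha$, so $x^{*}x\in L_1$ as required. The genuine $C^{*}$-input is just this state-theoretic characterization of closed left ideals; everything else is linear algebra and one standard functional-calculus approximation.
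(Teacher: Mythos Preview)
Your proof is correct and rests on the same core idea as the paper: both arguments first deduce $L_1+L_1^{*}=L_2+L_2^{*}$ from the equal finite codimensions, and then appeal to the state-theoretic description of closed left ideals. The paper's execution is shorter: it observes that any positive functional $\varphi$ vanishing on $L_1$ is self-adjoint, hence kills $L_1+L_1^{*}=L_2+L_2^{*}$, hence kills $L_2$, and then cites the separation theorem (Murphy 5.3.2) to conclude $L_2\subseteq L_1$ directly.

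Your route through $x^{*}x\in L_1$ followed by a functional-calculus approximation is valid but unnecessarily long. Once you have established $f_\alpha(x^{*}x)=0$ for each pure state $f_\alpha$ with $L_1\subseteq N_{f_\alpha}$, that is \emph{by definition} the statement $x\in N_{f_\alpha}$; hence $x\in\bigcap_\alpha N_{f_\alpha}=L_1$ immediately. The upgrade to $f_\alpha((x^{*}x)^{2})=0$ and the subsequent approximation $x=\lim_n x\cdot x^{*}x(x^{*}x+1/n)^{-1}$ can be dropped entirely.
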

\begin{proof}
It is clear that $\delta(L_2)\leq\delta(L_1)$. Now suppose $\delta(L_1)=\delta(L_2)<\infty$
and let $\varphi\colon\A\to\mathbb{C}$ be a positive linear functional that vanishes on $L_1$.
We show that $\varphi(L_2)=0$: As $\delta(L_1)=\delta(L_2)<\infty$, we have $L_1+L_1^*=L_2+L_2^*$. Then, we get
$$\varphi(L_2)\subseteq\varphi(L_2+L_2^*)=\varphi(L_1+L_1^*) = \{0\}$$
since $\varphi$ is self-adjoint (see \cite{murphy2014c}, 3.3.2. Theorem).
As $\varphi$ is an arbitrary positive functional on $\A$ that vanishes von $L_1$, the claim follows (see \cite{murphy2014c}, 5.3.2. Theorem).
\end{proof}

\begin{proposition}
Let $L\subseteq\A$ be a closed left ideal. If $\delta(L)<\infty$, then $\A/L$ is a semisimple $\A$-module.
\end{proposition}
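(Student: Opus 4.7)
The plan is to deduce the statement from Proposition \ref{ch_semisimple} by writing $L$ as a finite intersection of maximal left ideals. Since $L$ is closed, the remark identifies closed left ideals in a unital C$^*$-algebra with the semi-maximal ones, so $L=\bigcap_\alpha m_\alpha$ already holds for the (possibly infinite) family of maximal left ideals $m_\alpha\supseteq L$; the task is to trim this to a finite subfamily.

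I would do this greedily. After a trivial case treating $L=\A$, pick any maximal $m_1\supseteq L$ and set $L_1\coloneqq m_1$. Having chosen $m_1,\dots,m_i$ and formed $L_i=m_1\cap\dots\cap m_i$, if $L_i\supsetneq L$ then semi-maximality hands me some maximal $m_{i+1}\supseteq L$ with $L_i\nsubseteq m_{i+1}$; otherwise $L_i$ would lie in every maximal left ideal above $L$, forcing $L_i\subseteq\bigcap_\alpha m_\alpha=L$. Set $L_{i+1}\coloneqq L_i\cap m_{i+1}\subsetneq L_i$, a closed left ideal still sandwiched between $L$ and $\A$.

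The preceding proposition now closes the loop. Applied to $L\subseteq L_i$ it gives $\delta(L_i)\leq\delta(L)<\infty$, and applied to the strict inclusion $L_{i+1}\subsetneq L_i$ its rigidity clause forces $\delta(L_{i+1})>\delta(L_i)$. Hence $\delta(L_1)<\delta(L_2)<\dots$ is a strictly increasing sequence of integers bounded by $\delta(L)$, so after at most $\delta(L)+1$ iterations the construction must terminate with some $L_k=L$. Then $L=m_1\cap\dots\cap m_k$, and Proposition \ref{ch_semisimple} immediately yields that $\A/L$ is semisimple.

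The main obstacle is guaranteeing that each iteration produces a strict drop in the running intersection, and this is exactly the point where the preceding proposition is indispensable: without its rigidity statement (distinct nested closed left ideals cannot share the same finite $\delta$) one could imagine the process stalling with $L_k\supsetneq L$ forever. Coupling semi-maximality (which supplies the separating maximal ideal $m_{i+1}$) with this rigidity converts the finite codimension of $L+L^*$ into a finite termination bound, after which Proposition \ref{ch_semisimple} finishes the job.
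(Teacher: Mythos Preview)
Your argument is correct and rests on the same idea as the paper's: the preceding proposition forces $\delta$ to be strictly monotone along a strictly descending chain of closed left ideals above $L$, and since all values are bounded by $\delta(L)<\infty$ the chain is finite. The paper packages this as a direct bound on the length of any chain $L\subsetneq L_1\subsetneq\dots\subsetneq L_n=\A$, giving $\xi(L)\le\delta(L)<\infty$, and then invokes Proposition~\ref{ch_semisimple}(ii); you instead run the greedy descent through finite intersections of maximal left ideals until you hit $L$ and invoke Proposition~\ref{ch_semisimple}(iii). A small bonus of your route is that every intermediate $L_i$ is manifestly closed (being a finite intersection of maximal, hence closed, left ideals in a unital C$^*$-algebra), so the rigidity clause of the preceding proposition applies without further comment; in the paper's version the closedness of the intermediate chain members is left tacit.
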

\begin{proof}
By using Proposition \ref{ch_semisimple}, we only need to show that $\xi(L)<\infty$. Now, let 
$$ L \subsetneq L_1 \subsetneq \dots \subsetneq L_n = \A $$
be a chain of left ideals. We then see that $n\leq\delta(L)$ as $ \delta(L) > \delta(L_1) > \dots > \delta(L_n)$.
\end{proof}

\begin{theorem}
Let $a\in\A$. Then, the following statements are equivalent:
\begin{enumerate}[label={(\roman*)}]
\item $a$ is weak$^+$-Fredholm,
\item $a$ is semi$^+$-Fredholm,
\item $\delta(\A a)<\infty$ and $\A a$ is closed.
\end{enumerate}
\end{theorem}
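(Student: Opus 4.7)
The equivalence (i)$\iff$(ii) is already contained in Corollary~\ref{equiv_weak_semi}, and the implication (iii)$\Rightarrow$(i) is immediate from the preceding proposition: once $\A a$ is closed with $\delta(\A a)<\infty$, that proposition yields semisimplicity of $\A/\A a$, and Proposition~\ref{ch_semisimple} then forces $\xi_l(a)<\infty$, i.e.\ $a$ is weak$^+$-Fredholm. The real content therefore lies in (ii)$\Rightarrow$(iii).

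Assuming $a$ is semi$^+$-Fredholm, the first step is to obtain $\A a=\A(1-p)$ for some idempotent $p\in\soc(\A)$: the corollary after Proposition~\ref{ch_fred_idp} says that $\A a$ is a Fredholm left ideal, whereupon Proposition~\ref{ch_fred_idp} supplies such a $p$. This also establishes closedness, since $\A(1-p)=\{x\in\A:xp=0\}$ is the kernel of a continuous map. Next, to control $\delta$, I would pass to the adjoint: pick $b\in\A$ with $ba-1\in\soc(\A)$ and take adjoints. Because $\soc(\A)$ is $*$-closed in a unital C$^*$-algebra, we obtain $a^*b^*-1\in\soc(\A)$, so $a^*$ is semi$^-$-Fredholm, and the right-handed analogue of Proposition~\ref{ch_fred_idp} delivers an idempotent $r\in\soc(\A)$ with $a^*\A=(1-r)\A$. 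Consequently $\A a+(\A a)^*=\A(1-p)+(1-r)\A$.

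It remains to compute this codimension. The isomorphism $\A/\A(1-p)\cong\A p$, $[x]\mapsto xp$, identifies the image of $(1-r)\A$ with $(1-r)\A p$, and the splitting $\A p=r\A p\oplus(1-r)\A p$ (immediate from $x=rx+(1-r)x$, with trivial intersection since $r(1-r)=0$) yields $\A/(\A a+(\A a)^*)\cong r\A p$. The main obstacle is therefore the finite-dimensionality of $r\A p$ over $\mathbb{C}$. My intended route is to reduce to projections (every idempotent in a unital C$^*$-algebra is conjugate to a self-adjoint idempotent by an invertible element, and such conjugation preserves $\soc(\A)$, so $r\A p$ is $\mathbb{C}$-linearly isomorphic to $r_0\A p_0$ for suitable projections $r_0,p_0\in\soc(\A)$), then to write $r_0$ and $p_0$ as finite sums of mutually orthogonal minimal projections and invoke the standard C$^*$-algebraic fact that $e\A f$ is at most one-dimensional whenever $e,f$ are minimal projections.
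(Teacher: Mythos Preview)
Your argument is correct, but it takes a more circuitous route than the paper's. The paper's proof of (ii)$\Rightarrow$(iii) exploits the observation that in a C$^*$-algebra the idempotent produced by Proposition~\ref{ch_fred_idp} can be upgraded to a \emph{projection} $q\in\soc(\A)$ with $\A a=\A(1-q)$ (Davidson, Proposition~IV.1.1). Once $q=q^*$, taking adjoints immediately yields $a^*\A=(1-q)\A$, so no separate semi$^-$-Fredholm argument for $a^*$ is needed and only a single socle element appears. The elementary identity $x=(1-q)x+qx(1-q)+qxq$ then gives $\A=\A a+a^*\A+q\A q$, and the paper simply cites Alexander's theorem that $q\A q$ is finite-dimensional for any projection $q\in\soc(\A)$.

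Your approach keeps two distinct idempotents $p,r$, computes the quotient exactly as $r\A p$ (a nice computation, and correct), and then proves finite-dimensionality of $r\A p$ by reducing to projections and decomposing into minimal ones. This is essentially a hands-on reproof of the relevant piece of Alexander's result. It buys you a more self-contained argument (you only need Davidson's similarity-to-projection fact and the elementary observation that $e\A f$ is at most one-dimensional for minimal projections $e,f$), at the cost of length. Two minor remarks: you do not actually need similarity---the one-sided equalities $r\A=r_0\A$ and $\A p=\A p_0$ already force $r\A p=r_0\A p_0$; and the decomposition of a socle projection into orthogonal minimal projections can be obtained by a short induction on $\len_\A(\A r_0)$ using Davidson again, so your final step is not circular.
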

\begin{proof}
We only need to show ``$(ii)\Rightarrow(iii)$'':
By Proposition \ref{ch_fred_idp}, there exists an idempotent $p\in\soc(\A)$ such that $\A a = \A (1-p)$.
In $C^*$-algebras, there even exists a projection $q\in\soc(\A)$ such that $\A a = \A (1-q)$ (see \cite{davidson1996c}, Proposition IV.1.1).
We finally obtain a decomposition
$$ \A = \A a + a^*\A + q\A q $$
where $\dim q\A q < \infty$ (see \cite{alexander_1968}, Theorem 7.2).
\end{proof}

\bibliographystyle{plain}
\bibliography{references} 

\end{document}